\numberwithin{equation}{section}
\theoremstyle{plain}
\newtheorem{theorem}[equation]{Theorem}
\newtheorem{lemma}[equation]{Lemma}
\newtheorem{proposition}[equation]{Proposition}
\newtheorem{corollary}[equation]{Corollary}
\newtheorem*{namedtheorem}{\theoremname}
\newcommand{\theoremname}{testing}
\newenvironment{named}[1]{\renewcommand{\theoremname}{#1}\begin{namedtheorem}}{\end{namedtheorem}}
\theoremstyle{definition}
\newtheorem{definition}[equation]{Definition}
\newtheorem{remark}[equation]{Remark}
\newcommand{\refthm}[1]{Theorem~\ref{Thm:#1}}
\newcommand{\reflem}[1]{Lemma~\ref{Lem:#1}}
\newcommand{\refprop}[1]{Proposition~\ref{Prop:#1}}
\newcommand{\refcor}[1]{Corollary~\ref{Cor:#1}}
\newcommand{\refsec}[1]{Section~\ref{Sec:#1}}
\newcommand{\reffig}[1]{Figure~\ref{Fig:#1}}
\newcommand{\HH}{{\mathbb{H}}}
\newcommand{\tri}{\boldsymbol{\mathcal{T}}}
\newcommand{\bdy}{\partial}
\title{Triangulations of the 3-sphere with knotted edge}
\author{Dionne Ibarra}
\address{School of Mathematics, Monash University, VIC 3800, Australia.}
\email{{\rm \textcolor{blue}{dionne.ibarra@monash.edu}}}
\author{Daniel V. Mathews}
\address{School of Mathematics, Monash University, VIC 3800, Australia;
School of Mathematics, Monash University, VIC 3800, Australia; School of Physical and Mathematical Sciences, Nanyang Technological University, Singapore 637371}
\email{{\rm \textcolor{blue}{dan.v.mathews@gmail.com}}}
\author{Jessica S. Purcell}
\address{School of Mathematics, Monash University, VIC 3800, Australia.}
\email{{\rm \textcolor{blue}{jessica.purcell@monash.edu}}}
\author{Jonathan Spreer}
\address{School of Mathematics and Statistics F07, University of Sydney, NSW 2006, Australia.}
\email{\rm \textcolor{blue}{jonathan.spreer@sydney.edu.au}}
\begin{document}

\subjclass[2020]{Primary: 57Q15. Secondary: 57K32, 57K31, 57K10, 57Q70.}
\keywords{Complicated $3$-sphere triangulations, knots in $3$-spheres, fully augmented links, hyperbolic geometry, Dehn twists, twist reduced diagrams.}

\begin{abstract}
We prove that for any knot $K$, there exists a one-vertex triangulation of the $3$-sphere containing an edge forming $K$. The proof is constructive, and based on fully augmented links. We use our method to produce ``complicated'' simplicial triangulations of the $3$-sphere that we show are smallest possible, up to a constant multiplicative factor.
\end{abstract}

\maketitle

\section{Introduction}

Combinatorial properties of triangulations of spheres have been an active field of study for more than a century. In 1922, Steinitz showed that every triangulation of the $2$-dimensional sphere can be realised as the boundary of a convex $3$-polytope~
\cite{Steinitz22Theorem}; see also~\cite{ziegler95}. Triangulations of ($d$-dimensional) spheres with this property are called {\em polytopal} and have a very straightforward combinatorial structure: They can be embedded into $(d+1)$-space with straight lines and faces such that their vertices are in convex position.

In dimension three, not all triangulations of the 3-sphere are polytopal~\cite{Barnette73}. Instead, simplicial triangulations of the $3$-dimensional sphere can be grouped into larger and larger classes of triangulations, putting fewer and fewer restrictions on their combinatorial properties. For example, a (non-exhaustive) list of  such classes, ordered by inclusion, is given in \cite[Theorem~2.1]{BenedettiZiegler11}.
See also~\cite{Bjoerner95TopMethods} and the introductions of~\cite{BenedettiZiegler11, Lutz04NonConstructible} for more details and a comprehensive overview over the field. 

One method to prove that a triangulation of a 3-sphere does not lie in a given class is to use knot theory. Namely, various results state that a loop of edges forming a sufficiently complicated knot inside a triangulation acts as an obstruction to being a triangulation of a certain type. As an example, Hachimori and Ziegler showed that a triangulation of $S^3$ is not ``constructible'' if there is any knot in its 1-skeleton consisting of three edges~\cite{HachimoriZiegler}. 
Some of the first explicit examples of such triangulations were given by Lutz~\cite{Lutz}, who constructed a simplicial 3-sphere with three distinguished edges forming a trefoil. This was generalised by Benedetti and Lutz~\cite{BenedettiLutz:KnotsInCollapsible}, who found explicit examples of triangulations of the 3-sphere with three edges forming the trefoil, the double trefoil, and the triple trefoil. We give more history below. 

Our main contribution in this article is a construction to make this technique more powerful. Namely, we present one-vertex triangulations of the 3-sphere with 
a distinguished edge of the triangulation forming a fixed, arbitrary knot.

\begin{named}{\refthm{1vertex}}
\Copy{Statement:1vertex}{For any knot $K$ in $S^3$, there is a one-vertex triangulation of $S^3$ with an edge forming $K$.}
\end{named}

Our result is constructive and explicit, given only a diagram of the knot. The constructed triangulations are of reasonable size,
arising from triangulations that in some cases are even conjectured to be minimal; see \Cref{sec:twistknots}. 
We use our triangulations to give an upper bound on the number of tetrahedra required to triangulate a $3$-sphere containing an edge forming a knot $K$, where the bound only depends on the input diagram of $K$; see \refthm{TetrahedronCount}
and \Cref{Cor:hyperbolic_knot_tet_count}.
Note that since our triangulations only have one vertex, they are not simplicial, 
rather they are
\emph{generalised triangulations}. Generalised triangulations are obtained from a set of tetrahedra by successively gluing their triangular faces in pairs until we obtain a closed complex. Often, this gluing process causes all vertices to become identified to a single vertex. 

The setting of generalised, or {\em one-vertex triangulations} is natural and often more convenient for modern applications in 3-manifold topology; see for example~\cite{JacoRubinstein:0Efficient, Matveev}.
Moreover, this setting is still relevant for simplicial triangulations: the second derived subdivision of any given one-vertex triangulation is simplicial. In this simplicial version, knotted edges now appear as edge loops of length four. Thus the following is an immediate consequence of \refthm{1vertex}. 

\begin{corollary}\label{Cor:SimplicialLength4}
For any knot $K$ in $S^3$, there is a simplicial triangulation of the $3$-sphere with an edge loop of length four forming $K$.
\end{corollary}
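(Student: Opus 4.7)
The plan is to deduce this corollary directly from \refthm{1vertex} by passing to a second barycentric (derived) subdivision, as hinted in the paragraph immediately preceding the statement.

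First, I would apply \refthm{1vertex} to the knot $K$ to obtain a one-vertex generalised triangulation $T$ of $S^3$ with a distinguished edge $e$ forming $K$. Since $T$ has a single vertex $v$, the edge $e$ is a loop at $v$. Next, I would form the second barycentric subdivision $T''$. A standard fact (stated in the paragraph above the corollary) is that the second derived subdivision of any generalised triangulation is simplicial: the first barycentric subdivision of a $\Delta$-complex can still have multi-edges between the newly introduced barycentres (exactly what happens to an edge loop like $e$), but a further subdivision separates all such pairs, so $T''$ is a genuine simplicial complex triangulating $S^3$.

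Now I would analyse what happens to $e$ under two successive barycentric subdivisions. Let $m$ be the barycentre of $e$ added in the first subdivision; then $e$ is replaced by two one-simplices $[v,m]$ and $[m,v]$ (both joining the same pair of vertices $v$ and $m$). In the second subdivision, each of these is further split by its own barycentre $m_1$ and $m_2$, producing exactly four simplicial edges
\[
[v,m_1],\ [m_1,m],\ [m,m_2],\ [m_2,v].
\]
These four edges have distinct vertices (so they are honest edges of the simplicial complex $T''$) and their union, as a subspace of $S^3$, is precisely $e$. Since $e$ formed the knot $K$, this length-four edge loop in $T''$ also forms $K$, yielding the required simplicial triangulation.

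The entire argument is formal once \refthm{1vertex} is available, so I do not expect a real obstacle here; the only point worth stating carefully is that the second derived subdivision is simplicial even for one-vertex triangulations with edge loops, and that subdivision does not change the underlying subspace of $S^3$, so the knot type of the subdivided edge path is unchanged.
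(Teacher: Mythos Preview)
Your proposal is correct and follows exactly the argument the paper gives: take the one-vertex triangulation from \refthm{1vertex}, pass to the second derived subdivision to obtain a simplicial complex, and observe that the knotted loop edge becomes a length-four edge path. The paper treats this as an immediate consequence without writing out the subdivision details, so your explicit tracking of the barycentres $m,m_1,m_2$ is a faithful expansion of the same idea.
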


The techniques to prove \refthm{1vertex} arise from hyperbolic geometry, although the result is independent of any hyperbolic geometric properties of the knot complement. However, our method of proof does yield a triangulation such that removing the single vertex and edge, and collapsing just one of the tetrahedra, yields an ideal triangulation of the complement of the knot $K$ in $S^3$. Generically (in an appropriate sense), this triangulation has very nice hyperbolic geometric properties. Namely, the complement of $K$ in $S^3$ admits a hyperbolic structure, due to Thurston~\cite{Thurston}, and the given ideal triangulation is realised by positively oriented, convex ideal hyperbolic tetrahedra~\cite{HamPurcell}. This may have applications to quantum topology as discussed below. 

Our result also has connections to computational geometry and topology. Recently, Burton and He used computational techniques to build a family of one-vertex triangulations of the 3-sphere with all their edges being non-trivial knots~\cite{BurtonHe23}. This disproved a conjecture on the nature of triangulations of Seifert fibre spaces. As Burton and He note, many questions in computational 3-manifold topology arise by observing properties of small triangulations, that is, 3-manifold triangulations built using only a small number of tetrahedra. Our triangulations, although efficient with respect to the data associated to a knot diagram, lead to triangulations of the 3-sphere that can have arbitrarily large numbers of tetrahedra. We expect they may lead to additional counterexamples.

\subsection*{History of $3$-sphere triangulations with knotted edges}\label{Sec:History}
Triangulations of $3$-spheres with knotted edge loops have been extensively studied in the setting of \emph{simplicial triangulations}, i.e., simplicial complexes with geometric carrier homeomorphic to the $3$-sphere. See ~\cite[Theorem~2.3]{BenedettiLutz:KnotsInCollapsible} for a summary of 
results by Benedetti and Ziegler, Ehrenborg and Hachimori, Hachimori and Shimokawa, and Hachimori and Ziegler~
\cite{BenedettiZiegler11,EhrenborgHachimori01,HachimoriShimokawa04,HachimoriZiegler}.

Benedetti and Lutz used one of these results to construct the first explicit example of 
a non-collapsible (simplicial) triangulation of a $3$-sphere~\cite{BenedettiLutz:KnotsInCollapsible}.
Here, a triangulation of a sphere is \emph{collapsible} if and only if it admits a Morse function with no critical $2$-face (triangle), in the language of Forman's discrete Morse theory~\cite{forman98-morse,forman02-morse}.
We can also ask for the class of $3$-sphere triangulations with all of its Morse functions having at least $1,2,3,\ldots$ critical 2-faces.

\begin{theorem}[Benedetti~\cite{Benedetti12}, Lickorish~\cite{Lickorish1991}]
\label{Thm:benedettilickorish}
Let $S$ be a (simplicial triangulation of a) 3-sphere with a subcomplex of $m$ edges, 
isotopic to the sum of $t$ trefoil knots. For any discrete Morse function $f$ on $S$
one has
\[ c_2 (f) \geq t-m+1, \]
where $c_2(f)$ denotes the number of critical 2-faces of $S$.
\end{theorem}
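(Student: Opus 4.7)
The plan is to bound the bridge number $b(K)$ of the knotted subcomplex in terms of $c_2(f)$ and $m$, then invoke Schubert's additivity theorem for bridge number under connect sum together with the fact that the trefoil has bridge number two. Since for $t$ trefoils one has $b(K) = 2t - (t-1) = t+1$, any inequality of the form $b(K) \leq c_2(f) + m$ will give the claim $c_2(f) \geq t - m + 1$ immediately.

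First I would set up Forman's discrete Morse theory, viewing $f$ as an acyclic matching on the Hasse diagram of $S$ whose unmatched simplices are the critical cells. Dually this matching gives a handle decomposition of $S^3$ with $c_i(f)$ handles of index $3-i$, and so (after perturbation) a smooth sweepout of $S^3$ by level surfaces whose saddle points are in bijection with the critical $2$-faces of $f$. The subcomplex $K$, being contained in the $1$-skeleton, inherits a height function whose local extrema on $K$ can be analysed against this sweepout. The central technical step is to extract from this structure a bridge presentation of $K$ in which the number of bridges is controlled by how many saddles the sweepout contains and by the combinatorial complexity of $K$.

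The key lemma I would then prove is $b(K) \leq c_2(f) + m$. The idea is that each critical $2$-face, dually a saddle of the sweepout, can increase the bridge count of the transverse intersection of $K$ with level surfaces by at most one; meanwhile, the $m$ edges give an a priori bound on the initial complexity of $K$ with respect to the sweepout. Tracking how $K$ interacts with each handle attachment in order, and cancelling inessential critical pairs via Forman's cancellation theorem when they do not affect $K$, yields the stated inequality. Substituting $b(K) = t+1$ and rearranging gives $c_2(f) \geq t - m + 1$.

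The main obstacle is the key lemma: one must show precisely that passing a level surface through a single saddle changes the bridge structure of the embedded graph $K$ by at most one bridge. This is essentially a thin-position argument in the discrete setting, and the subtlety is that the matching encoding $f$ need not respect $K$ as a subcomplex, so the interaction between the matched pairs $(\sigma,\tau)$ and the edges of $K$ must be handled carefully, potentially by modifying the matching locally near $K$ before running the bridge-counting argument.
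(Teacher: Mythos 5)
First, a point of comparison: the paper does not prove this theorem at all --- it is quoted with citations to Benedetti and Lickorish --- so your proposal must be measured against the arguments in those references. Your overall reduction is in the right spirit: Schubert's additivity gives $b(K_t)=t+1$, so any inequality of the form $b(K)\leq c_2(f)+m$ would indeed yield the theorem. The gap is that this inequality, your key lemma, is never actually proved, and it is strictly stronger than what the cited proofs establish. The known arguments (Goodrick, Lickorish, Benedetti) bound not the bridge number but the \emph{rank of the knot group}: a discrete Morse function on $S$ with $c_2$ critical triangles yields, after removing an open regular neighbourhood of the $m$-edge subcomplex and collapsing, a presentation of $\pi_1(S^3\setminus K)$ with at most $c_2+m$ generators, and one then invokes the purely algebraic fact that the group of the $t$-fold connected sum of trefoils cannot be generated by fewer than $t+1$ elements (for instance because the double branched cover is a connected sum of $t$ copies of $L(3,1)$, whose first homology $(\mathbb{Z}/3)^t$ needs $t$ generators, while a $g$-generator knot group forces that homology to be generated by $g-1$ elements). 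Since the rank of the group is bounded above by the bridge number, your lemma implies theirs but not conversely; you have set yourself a harder task than the theorem requires, and the harder task is exactly the part you have not done.

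Concretely, the sketch of the key lemma fails at three points. (i) The sweepout you extract from the dual handle decomposition has level surfaces of varying genus, whereas bridge number is defined relative to a genus-zero sweepout of $S^3$; converting a count of maxima of $K$ with respect to your handle decomposition into a genuine bridge presentation requires isotoping the decomposition to a standard one while controlling $K$, which is not addressed and is where a thin-position argument would have to do real work. (ii) The assertion that the $m$ edges give ``an a priori bound on the initial complexity of $K$ with respect to the sweepout'' is unjustified: the gradient vector field of $f$ has no a priori relation to the subcomplex $K$, a single edge of $K$ may meet a level set of the induced filtration many times, and your proposed remedy (locally modifying the matching near $K$) is precisely the unproven content --- modifying an acyclic matching while preserving acyclicity and not increasing $c_2$ is delicate. (iii) A smaller imprecision: in the dual decomposition the critical $2$-faces give index-one handles, but the saddles of the resulting sweepout also include the index-two handles coming from critical $1$-faces, so the claimed bijection between saddles and critical $2$-faces is incorrect and the bookkeeping in your ``at most one new bridge per saddle'' step would have to account for $c_1(f)$ as well. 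If you want a complete proof, the group-theoretic route above is both shorter and standard.
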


We use our construction from \refthm{1vertex} together with \Cref{Thm:benedettilickorish}
to deduce the following statement, proved in \refsec{trefoils}.

\begin{corollary}\label{Cor:DiscreteMorseCor}
For every $t \geq 0$, there exists a (simplicial) triangulation of the $3$-sphere $S_t$ such that every discrete Morse function $f$ on $S_t$ has at least $t-3$ critical $2$-faces and $S_t$ has at most $24^2 \cdot (48 \cdot t - 19)$ tetrahedra.
\end{corollary}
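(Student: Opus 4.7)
The plan is to apply \refthm{1vertex} to the connected sum $K_t = \#_t \, 3_1$ of $t$ trefoil knots, convert the resulting one-vertex triangulation into a simplicial one by taking the second barycentric subdivision, and then invoke \Cref{Thm:benedettilickorish}. I would first fix a standard diagram $D_t$ of $K_t$ obtained by concatenating $t$ copies of the standard three-crossing trefoil diagram, noting that $D_t$ has $3t$ crossings. Then \refthm{1vertex}, applied to this diagram, produces a one-vertex (generalised) triangulation $T_t$ of $S^3$ containing an edge $e$ whose image is isotopic to $K_t$.

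Next I would bound the size of $T_t$. By \refthm{TetrahedronCount}, the number of tetrahedra in $T_t$ is controlled by a function of the combinatorics of $D_t$. Substituting the $3t$ crossings of the standard connected-sum diagram (and any other combinatorial quantities tracked in \refthm{TetrahedronCount}, such as twist-region counts) into that bound should yield the stated estimate of at most $48t - 19$ tetrahedra; verifying that this is the precise linear function that emerges from \refthm{TetrahedronCount} for the chosen diagram $D_t$ is the main bookkeeping step.

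To make the construction simplicial, I would pass to the second barycentric subdivision $S_t$ of $T_t$, following the observation preceding \Cref{Cor:SimplicialLength4}. Each barycentric subdivision of a $3$-dimensional triangulation multiplies the tetrahedron count by $4! = 24$, so
\[
\#\{\text{tetrahedra of }S_t\} \;\leq\; 24^2 \cdot (48t - 19).
\]
Moreover, as recorded in \Cref{Cor:SimplicialLength4}, the edge $e$ of $T_t$ becomes an edge loop of length $4$ in $S_t$ still forming $K_t$.

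Finally, I would apply \Cref{Thm:benedettilickorish} to $S_t$ with the subcomplex consisting of the $m = 4$ edges of this loop, and with the $t$ trefoil summands of $K_t$. The theorem gives, for every discrete Morse function $f$ on $S_t$,
\[
c_2(f) \;\geq\; t - m + 1 \;=\; t - 4 + 1 \;=\; t - 3,
\]
which completes the proof. The only nontrivial step is confirming the precise constant $48t-19$ from \refthm{TetrahedronCount}; everything else is a direct concatenation of the results already available in the paper.
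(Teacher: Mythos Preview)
Your proposal is correct and follows exactly the paper's approach: construct a one-vertex triangulation of $S^3$ with an edge forming $K_t$, bound its size via \refthm{TetrahedronCount}, pass to the second derived subdivision, and apply \Cref{Thm:benedettilickorish} with $m=4$. The bookkeeping you flag is resolved in the paper by observing that the fully augmented link for $D_t$ needs $c=4t-1$ crossing circles (three per trefoil, as each trefoil is a $(2,3)$-torus knot handled as in \refprop{AugGivesHyperbolic}, plus $t-1$ extra circles for the non-prime connected-sum cuts), all Dehn filled along slope $1/0$, so \refthm{TetrahedronCount} gives $12(4t-1)+0-7=48t-19$.
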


In \Cref{Sec:Bounds} we present explicit instructions on how to construct one-vertex triangulations $\tilde{S}_t$, with second derived subdivision resulting in $S_t$.

\subsection*{$H$-triangulations}

Our construction is inspired by the notion of {\em H-triangulations}, a tool from quantum topology.
Following Aribi, Gu\'eritaud, and Piguet-Nakazawa ~\cite[Section 2.1]{BA-G-PN:TwistKnots}, define an \emph{H-triangulation} to be a triangulation $\tri$ of a closed 3-manifold $M$ with a single vertex and a distinguished edge $E$; this edge represents a knot $K$ in $M$. We say $\tri$ is an \emph{H-triangulation} for $(M, K)$. 
For any twist knot $K$, Aribi, Gu\'eritaud, and Piguet-Nakazawa describe H-triangulations of $(S^3,K)$.
Thus \refthm{1vertex} generalises this result to say that for any knot $K$ in $S^3$, $(S^3,K)$ has an H-triangulation.
This triangulation is obtained from an ideal triangulation by identifying a suitable face, inserting a new tetrahedron, and putting back the vertices. 

H-triangulations have consequences in quantum topology; they are used in \cite{BA-G-PN:TwistKnots} 
to study the quantum Teichm\"uller TQFT for twist knots, and to prove the Teichm\"uller TQFT volume conjecture for twist knots. 
The quantum Teichm\"uller TQFT was constructed by Andersen and Kashaev in~\cite{AndersenKashaev} where they describe H-triangulations. For the quantum applications, edges of the H-triangulation are given weights, related to angles of a geometric triangulation, and a conjectural limit is proposed that relates to Kashaev quantum dilogarithm invariants~\cite{Kashaev4}. These are specialisations of the coloured Jones polynomials~\cite{MurakamiMurakami}, at the origin of the hyperbolic volume conjecture~\cite{Kashaev6}.
The notion of H-triangulations is also considered by Kashaev, Luo, and Vartanov~\cite{KashaevLuoVartanov}, who use H-triangulations of the trefoil, figure-8, $5_2$ and $6_1$ knots to compute examples of quantum invariants.

We do not consider quantum invariants associated to our construction of H-triangulations. 
However, we note that triangulations arising from our construction are frequently geometric, see work by Ham and Purcell~\cite{HamPurcell}, and with well understood volume bounds~\cite{FKP:Volume}. This property may turn out to be useful in potential future applications. 

\section*{Acknowledgements}  
This project began during the MATRIX Research program \emph{Low Dimensional Topology: Invariants of Links, Homology Theories, and Complexity}. 
The authors acknowledge the support by the Australian Research Council. Ibarra, Mathews, and Purcell were supported by ARC DP DP210103136, Ibarra and Purcell by ARC DP240102350, and Spreer by ARC DP220102588.

\section{From ideal to one-vertex triangulations}

Let $M$ be a closed orientable 3-manifold, and let $K$ be a knot in $M$, with $N(K)$ a regular neighbourhood. Suppose $M-K$ has an ideal triangulation $\tri$, or equivalently, $M-N(K)$ is triangulated by truncated tetrahedra, each with all their truncated vertices on $\bdy N(K)$. Hence, the edges of $\tri$ have both endpoints on ideal vertices, and are thus called \emph{ideal edges}.
Suppose the triangulation has a distinguished face $T$ that \emph{spans a meridian} of $N(K)$: that is, two of the ideal edges $e_1$ and $e_2$ of $T$, meeting at the ideal vertex $v$ of $T$ and oriented towards $v$, are identified to each other in an orientation preserving manner, and in a small neighbourhood of $T$ near $v$, the face $T$ intersects $\bdy N(K)$ in a closed curve forming a meridian. We say that $T$ forms a \emph{hat triangle}, following {\em Regina} \cite{regina} nomenclature.\footnote{Such a $T$ is also known as a \emph{horn triangle}.}
We assume throughout this article that $T$ is adjacent to two distinct tetrahedra.

In order to obtain an H-triangulation from $\tri$, we consider the following construction: Cut open the ideal triangulation along the face $T$. Label the two resulting boundary faces $s$ and $s'$. Insert a folded tetrahedron with face pairings given in \Cref{Fig:Tetrahedron} to obtain a closed triangulation. 

\begin{figure}[ht]
  \centering
$\vcenter{\hbox{\begin{overpic}[scale=1]{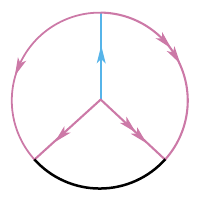}
    \put(25, 60){$s$}
\put(93, 80){$m$}
\put(42, -5){$E$}
\put(43, 22){$m$}
\put(65, 60){$s'$}
\put(47, 43){$\pmb 1$}
\put(47, 92){$\pmb 0$}
\put(82, 12){$\pmb 2$}
\put(9, 12){$\pmb 3$}
\end{overpic}}}$

\caption{The tetrahedron $\pmb \Delta$ to be inserted has faces $s$ and $s'$ glued to the cut-open face $T$ that was spanning the meridian. The faces labelled $m$ are folded over the edge $E$ illustrated in black, which becomes the knot.}
\label{Fig:Tetrahedron}
\end{figure}

\begin{theorem}\label{Thm:IdealToFiniteTriangulation}
Let $M$ be a closed orientable 3-manifold, $K$ a knot in $M$, and $\tri$ an ideal triangulation of $M-K$. Moreover, let $T$ be a face of $\tri$ that spans a meridian of $N(K)$. Then inserting the folded tetrahedron $\pmb \Delta$ along $T$ as shown in \Cref{Fig:Tetrahedron} yields a one-vertex triangulation of $M$, where the vertex is \emph{material}, i.e.\ not ideal. The distinguished edge $E$ in $\pmb \Delta$ forms the knot $K \subset M$.
\end{theorem}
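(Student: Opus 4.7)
The plan is to verify three claims: the construction yields a single material vertex, the edge $E$ is a loop, and the underlying space is homeomorphic to $M$ with $E$ isotopic to $K$. The approach combines tracking combinatorial identifications with a topological argument that the insertion implements meridian Dehn filling of $M-N(K)$.

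For the vertex and edge counts, I would trace identifications directly. Since $\tri$ is an ideal triangulation of the one-cusped manifold $M-K$ whose ideal vertices may be taken to be identified to a single ideal vertex $v$, all three corners of the face $T$ coincide with $v$. Gluing the faces $s$ and $s'$ of $\pmb{\Delta}$ to the two sides of the cut-open $T$ identifies the vertices of $s$ and $s'$ with $v$; since any two faces of a tetrahedron together contain all four vertices, every vertex of $\pmb{\Delta}$ is then identified with $v$, and the fold identification of the two $m$-faces is consistent with this. Because $\pmb{\Delta}$ contributes material (not ideal) vertices, $v$ becomes a material vertex in the new complex, giving a one-vertex triangulation. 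Both endpoints of $E$ equal this single vertex, so $E$ is a loop.

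For the topological identification, I would interpret the insertion as meridian Dehn filling. Folding the two $m$-faces of $\pmb{\Delta}$ along their common edge $E$ collapses the tetrahedral boundary sphere onto the sphere $s \cup s'$ (two triangles sharing an edge), leaving $E$ as an interior arc; so the insertion glues a topological 3-ball into the cut triangulation along $s, s'$. Under truncation, $\tri$ is a triangulation of $M-N(K)$ and the face $T$ becomes a hexagon meeting $\bdy N(K)$ in an arc that, after the identification of $e_1$ with $e_2$ at $v$, closes up into a meridian curve (this is the spanning-a-meridian hypothesis). The insertion then fills in the truncation triangles at the corners of $T$ by the material vertices of $\pmb{\Delta}$ and caps this meridian curve with the fold disk running along $E$. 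Combinatorially this realises the meridian Dehn filling of $M-N(K)$, which recovers $M$. Finally, the filled-in region is a solid torus with the fold disk as a meridian disk and $E$ as the core arc; since the core of $N(K)$ is isotopic to $K$ and $E$ is a loop based at the single material vertex, $E$ forms $K$.

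The main obstacle is the topological verification in the second paragraph: precisely matching the combinatorial insertion of $\pmb{\Delta}$ with the meridian Dehn filling. This requires careful bookkeeping at the interface of the truncated tetrahedra near $T$ with the new material vertices and fold structure supplied by $\pmb{\Delta}$, and uses the spanning-a-meridian hypothesis directly to identify the filled curve on $\bdy N(K)$ as the meridian rather than some other slope. As a sanity check, one can verify that the resulting complex is a manifold by inspecting that every vertex link is a sphere.
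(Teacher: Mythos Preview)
Your plan is essentially the paper's approach: interpret the insertion of $\pmb\Delta$ as meridian Dehn filling of $M-N(K)$, so that the result is $M$ and the core $E$ is $K$. The paper organises this into three lemmas---vertex link becomes a sphere, $M'-N(E)\cong M-N(K)$ via an explicit deformation retraction of the truncated $\pmb\Delta$ onto $T$, and the filling slope is the meridian---which together address exactly the ``main obstacle'' you flag.

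One point worth tightening: your first paragraph shows there is a single vertex \emph{class}, but the assertion that this vertex is \emph{material} is precisely the statement that its link is a sphere rather than a torus. Counting identifications does not establish this; it is the content of the paper's vertex-link computation (tracking how the four cusp triangles from $\pmb\Delta$ convert the torus link into a sphere). You relegate this to a ``sanity check'', but in your logical structure it is not optional: until the Dehn-filling argument is made rigorous, you have no independent reason to know the complex is a manifold, and without that the phrase ``$v$ becomes a material vertex'' is an assertion, not a conclusion. The paper handles this by doing the link computation first, then identifying the manifold; you could alternatively make the Dehn-filling picture fully rigorous first (your ``forward'' direction versus the paper's ``backward'' removal of $N(E)$), but either way that step carries the weight.
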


The main steps of the proof of \Cref{Thm:IdealToFiniteTriangulation} are carried out in \Cref{Lem:CuspNbhdSphere}, \Cref{Lem:NbhdEdge}, and \Cref{Lem:DehnFillMeridian}. We start with the following immediate consequence:

\begin{corollary}\label{Cor:KnottedEdgeS3}
When $M=S^3$ in \Cref{Thm:IdealToFiniteTriangulation}, $E$ forms the knot $K$ in the resulting one-vertex triangulation of $S^3$.
\end{corollary}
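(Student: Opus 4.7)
The plan is to simply specialise \Cref{Thm:IdealToFiniteTriangulation} to the case $M = S^3$. Given a knot $K \subset S^3$ together with an ideal triangulation $\tri$ of $S^3 - K$ that admits a face $T$ spanning a meridian of $N(K)$, the hypotheses of the theorem are satisfied verbatim. Inserting the folded tetrahedron $\pmb\Delta$ along $T$ as in \Cref{Fig:Tetrahedron} then produces a one-vertex triangulation of $S^3$, and the theorem's conclusion says precisely that the distinguished edge $E$ of $\pmb\Delta$ forms the knot $K$. There is nothing further to prove beyond invoking the theorem.

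The only subtlety is that one must know such input data exists: for an arbitrary knot $K \subset S^3$, an ideal triangulation of the complement together with a meridian-spanning face $T$ adjacent to two distinct tetrahedra. This is not a difficulty with the corollary itself, but rather a separate existence question that the paper addresses by its explicit construction from fully augmented links (referenced in the introduction and carried out in later sections). The main potential obstacle is therefore not in the deduction here, but in the background construction producing a suitable $(\tri, T)$; once that is available, the corollary follows at once. Accordingly, the written proof should consist of a one-line appeal to \Cref{Thm:IdealToFiniteTriangulation} with $M = S^3$.
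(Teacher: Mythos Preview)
Your proposal is correct and matches the paper's approach exactly: the paper states this corollary as an ``immediate consequence'' of \Cref{Thm:IdealToFiniteTriangulation} and gives no separate proof. Your observation that the existence of suitable input data $(\tri,T)$ is handled elsewhere (in \Cref{sec:mertriangle}) is also accurate.
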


\begin{lemma}\label{Lem:CuspNbhdSphere}
Gluing the tetrahedron $\pmb{\Delta}$ from \Cref{Fig:Tetrahedron} into an ideal triangulation with one ideal vertex along a hat triangle yields a one-vertex triangulation of a closed manifold. That is, the neighbourhood of the vertex class is homeomorphic to the $3$-ball. 
\end{lemma}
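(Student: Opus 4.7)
The plan is to establish two things in order: first, that after the insertion every vertex of the resulting complex lies in a single vertex class; second, that this vertex class has a $3$-ball neighbourhood, equivalently that its link is a $2$-sphere.

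For the first claim, I would track vertex identifications carefully. Label the vertices of $\pmb{\Delta}$ by $0,1,2,3$ as in \Cref{Fig:Tetrahedron}. The fold that identifies the two $m$-faces across $E$ identifies a pair of vertices of $\pmb{\Delta}$. The faces $s$ and $s'$ are glued to the two sides of the cut-open hat triangle $T$, and since $T$ is a face of an ideal triangulation with a single ideal vertex $v$, all three corners of $T$ are $v$. These face gluings force the remaining vertices of $\pmb{\Delta}$ to be identified with $v$. Putting the identifications together, every vertex of the resulting complex lies in a single class.

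The second claim is the topological heart of the lemma. Before insertion, the link of $v$ is a closed surface $L$; in the setting relevant to the paper $L$ is a torus, though the argument below does not require this in full. The corners of the hat triangle $T$ contribute link edges in $L$, and the identification $e_1 \sim e_2$ causes two of these link edges to coincide, so that the corners of $T$ trace a closed curve $\gamma$ on $L$ which, under the assumption that $T$ spans a meridian, represents a meridian of $N(K)$ on the cusp cross-section. Cutting along $T$ therefore cuts $L$ along $\gamma$, producing an annular cut whose two boundary circles need to be capped off.

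I would then verify that the four corner-link triangles of $\pmb{\Delta}$ assemble, under the face identifications of $s, s'$ with the two sides of $T$ and the $m$--$m$ fold, into exactly two disks that cap off these two boundary circles: one disk comes from the link triangles at the two vertices identified by the fold, and the other from the link triangles at the remaining two corners of $\pmb{\Delta}$. Topologically this reproduces the familiar picture of meridional Dehn filling of the cusp: a $T^2 \times [0,1)$ neighbourhood of $v$ is replaced by a $3$-ball glued along a meridian annulus, so that $L$ becomes $S^2$. This gives the claimed $3$-ball neighbourhood of the vertex class. The main hurdle I anticipate is the explicit combinatorial check that the four link triangles of $\pmb{\Delta}$, with the correct vertex labels from \Cref{Fig:Tetrahedron}, do in fact glue together into two disks capping $\gamma$; once this local verification is made, the topological conclusion is standard.
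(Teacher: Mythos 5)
Your overall strategy is the same as the paper's: check that all vertices fall into a single class, then track the link of that class (the cusp triangulation) through the cut-and-insert operation and show it becomes a sphere. The first part is fine, and the picture you aim for --- the torus $L$ cut along a meridian to give an annulus whose two boundary circles are capped by disks --- is indeed what happens in the end. But the mechanism you describe is wrong in two concrete ways, and the step you defer is the entire content of the proof. First, cutting along $T$ does not only cut $L$ along $\gamma$. The face $T$ meets the cusp torus in \emph{three} edges, one per corner: the corner where $e_1$ and $e_2$ meet traces the closed meridian, but the other two corners trace two further arcs sharing an endpoint (see \reffig{CuspNeighbourhoodCut}). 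Cutting along $T$ severs $L$ along all three, so besides the two meridian circles there is a slit in the interior of your annulus that must also be re-closed by pieces of $\pmb\Delta$.

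Second, your proposed assembly of the four link triangles into two capping disks --- one from the pair of vertices identified by the fold, one from the two ends of $E$ --- does not match the combinatorics of \reffig{Tetrahedron}. Each end of $E$ lies on both $m$-faces and on exactly one of $s$, $s'$, so the $m$-fold turns \emph{each} of those two link triangles \emph{individually} into a cone (a disk) with boundary on $s$ or on $s'$; these are the two disks capping the two meridian circles, one disk per circle. The two vertices identified by the fold each lie on only one $m$-face, so their link triangles are glued to \emph{each other} along their $m$-sides, forming a single square whose remaining four sides lie on $s$ and $s'$; that square is what fills the slit coming from the other two corner-traces of $T$, and being a disk inserted along a cut made along a tree it leaves the topology unchanged. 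So the four triangles assemble into three disks, not two, and the pairing is different from the one you state; carrying out your deferred check with your expected pairing would fail. Since that check --- done explicitly in the paper via Figures~\ref{Fig:CuspNeighbourhoodCut}, \ref{Fig:CuspNeighbourhoodGlue} and \ref{Fig:Final} --- is the whole proof, the proposal as written does not yet establish the lemma. (A minor further point: your remark that the argument does not really need $L$ to be a torus is off; to conclude that the capped surface is a sphere you need $\gamma$ to be non-separating on a genus-one surface, which is exactly the torus-with-meridian situation.)
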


\begin{proof}
\Cref{Fig:CuspNeighbourhoodCut,Fig:CuspNeighbourhoodGlue,Fig:Final} show that the boundary of a neighbourhood of the ideal vertex becomes a sphere. Thus when the neighbourhood pieces are patched together, they form a ball with this sphere as boundary.

In more detail, the torus $\bdy N(K)$ inherits a \emph{cusp triangulation} from the ideal triangulation of $M-K$: this is the intersection of $\bdy N(K)$ with the ideal tetrahedra. Thus the triangles of $\bdy N(K)$ come from triangles parallel to truncated ideal vertices. The distinguished face $T$ is adjacent to two tetrahedra, which we call $\Delta_1$ and $\Delta_2$, and we label the ideal vertices of $\Delta_j$, $j = 1,2$, by $0$, $1$, $2$, and $3$ such that face $T$ is identified with face $(013)$ of $\Delta_1$ and face $(310)$ of $\Delta_2$. The face $T$ has three ideal vertices, each of which meets the cusp triangulation; in the cusp triangulation we see three edges of triangles corresponding to intersections of $T$ with $\bdy N(K)$. In Figure~\ref{Fig:CuspNeighbourhoodCut}(A), these edges are drawn in orange.
By hypothesis, one of them is an edge spanning a meridian, which we have placed on the left and right of a quadrilateral fundamental domain for $\bdy N(K)$; this is the thick vertical edge, drawn on both vertical sides of the rectangle. The other two edges coming from intersections of $T$ with $\bdy N(K)$ share a common endpoint (as $T$ is a hat triangle), namely the orange point drawn in the middle of the horizontal sides.

We label the ideal vertices of both $\Delta_1$ and $\Delta_2$ such that the meridian coming from $T$ runs across the triangles at the ideal vertex labelled $1$. Hence, the cusp triangles on the far left and far right are parallel to the ideal vertices $\Delta_1(1)$ and $\Delta_2(1)$ of tetrahedra $\Delta_1$ and $\Delta_2$, respectively. The four edges running from $1$ down to $0$ and from $1$ down to $3$ in $\Delta_j$, $j = 1,2$ are all glued together. Call the resulting edge class $e$. The intersection of $\bdy N(K)$ with one end of this edge class $e$ forms the corners of the quadrilateral.
We have chosen the quadrilateral fundamental domain so that the upper and lower sides of the quadrilateral also meet $e$ at its other end, as it runs from vertex $0$ down to $1$ and from $3$ down to $1$ in each of $\Delta_1$, $\Delta_2$. Adjacent to these, we see triangles labelled $\Delta_j(0)$ and $\Delta_j(3)$, $j=1,2$. 

Now slice open the triangulation of $M-N(K)$ along the face $T$. The cusp triangulation changes as in \reffig{CuspNeighbourhoodCut}(B). Slicing open yields two unglued ideal triangles, whose intersections with $\bdy N(K)$ are shown in \reffig{CuspNeighbourhoodCut}(B) in orange and green. 

\begin{figure}[ht]
 $$\vcenter{\hbox{\begin{overpic}[scale=1]{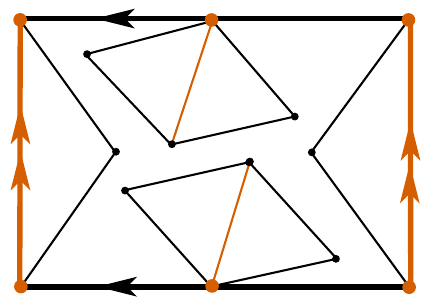}
    \put(14, 82){$\Delta_1(1)$}
\put(12, 118){$0$}
\put(12, 22){$3$}
\put(45, 71){$2$}
 \put(165, 83){$\Delta_2(1)$}
\put(188, 116){$3$}
\put(188, 24){$0$}
\put(158, 71){$2$}
\put(62, 111){$\Delta_i(0)$}
\put(90, 124){$1$}
\put(52, 113){$2$}
\put(78, 85){$3$}
\put(95, 104){$\Delta_j(3)$}
\put(102, 123){$1$}
\put(91, 84){$0$}
\put(128, 90){$2$}
\put(82, 43){$\Delta_i(3)$}
\put(109, 57){$0$}
\put(71, 47){$2$}
\put(97, 18){$1$}
\put(115, 30){$\Delta_j(0)$}
\put(119, 53){$3$}
\put(110, 16){$1$}
\put(148, 23){$2$}
\put(95, -7){(A)}
\end{overpic}}} \qquad
 \vcenter{\hbox{\begin{overpic}[scale=1]{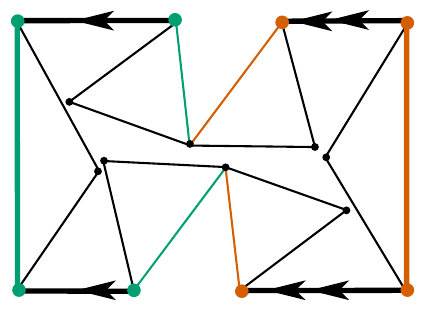}
    \put(10, 75){$\Delta_1(1)$}
\put(12, 110){$0$}
\put(12, 26){$3$}
\put(37, 62){$2$}
 \put(165, 80){$\Delta_2(1)$}
\put(187, 110){$3$}
\put(187, 27){$0$}
\put(163, 69){$2$}
\put(58, 108){$\Delta_1(0)$}
\put(78, 124){$1$}
\put(47, 99){$2$}
\put(81, 88){$3$}
\put(114, 100){$\Delta_2(3)$}
\put(131, 124){$1$}
\put(104, 82){$0$}
\put(141, 82){$2$}
\put(61, 45){$\Delta_1(3)$}
\put(93, 58){$0$}
\put(56, 59){$2$}
\put(65, 20){$1$}
\put(117, 36){$\Delta_2(0)$}
\put(114, 55){$3$}
\put(118, 19){$1$}
\put(150, 42){$2$}
\put(95, -7){(B)}
\end{overpic}}}$$
 \caption{(A) Start with cusp neighbourhood where $i=1$ and $j=2$ or $i=2$ and $j=1$. (B) Cut along face $T$ to unglue face $\Delta_1(013)$ from $\Delta_2 (013)$.} \label{Fig:CuspNeighbourhoodCut}
\end{figure}

When we attach the new tetrahedron $\pmb \Delta$ of \reffig{Tetrahedron}, we add a new vertex to the cusp triangulation, corresponding to the edge $E$, as well as
four new triangles (parallel to the vertices of $\pmb \Delta$) and their side gluings.
These four triangles with gluings coming from faces $s$ and $s'$ are shown in \reffig{CuspNeighbourhoodGlue}. Here $s$ is the orange face, and $s'$ is the green face. We have not yet identified the sides of the triangles coming from identifying the faces labelled $m$. However, these sides of triangles in the figure are each labelled with an arrow and one, two, or three tick marks, indicating how they must be identified. 

\begin{figure}[ht]
  \centering
$\vcenter{\hbox{\begin{overpic}[scale=1]{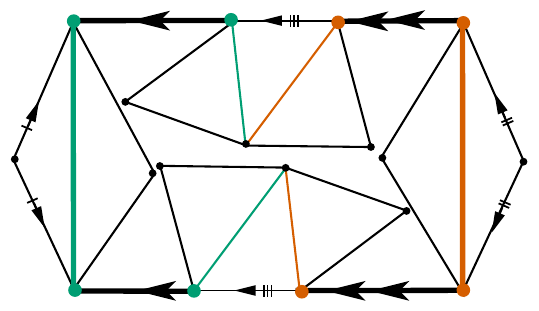}
  \put(37, 74){$\Delta_1(1)$}
\put(40, 108){$0$}
\put(40, 28){$3$}
\put(64, 61){$2$}
 \put(192, 80){$\Delta_2(1)$}
\put(213, 110){$3$}
\put(213, 27){$0$}
\put(188, 69){$2$}
\put(86, 108){$\Delta_i(0)$}
\put(105, 124){$1$}
\put(74, 99){$2$}
\put(108, 88){$3$}
\put(141, 100){$\Delta_j(3)$}
\put(158, 124){$1$}
\put(131, 82){$0$}
\put(168, 82){$2$}
\put(89, 45){$\Delta_i(3)$}
\put(121, 58){$0$}
\put(83, 59){$2$}
\put(93, 20){$1$}
\put(145, 36){$\Delta_j(0)$}
\put(142, 55){$3$}
\put(146, 19){$1$}
\put(178, 42){$2$}
\put(20, 70){$\pmb \Delta$}
\put(125, 30){$\pmb \Delta$}
\put(125, 110){$\pmb \Delta$}
\put(230, 70){$\pmb \Delta$}
\end{overpic}}} $
  \caption{Glue faces $\Delta_1(013)$ and $\Delta_2(013)$ to faces $s$ (orange) and $s'$ (green) of $\Delta$. The new vertices at left and right are ends of $E$. 
	} \label{Fig:CuspNeighbourhoodGlue}
\end{figure}

Identifying faces labelled $m$ folds the two triangles on the far left and far right, identifying their top and bottom, as indicated by arrows with single and double ticks in \reffig{CuspNeighbourhoodGlue}. We then identify the double arrowed edges and the single arrowed edges. The result is a disc with two sides, coming from faces labelled $m$, shown in \reffig{Final}. 
\begin{figure}[ht]
   \centering
$\vcenter{\hbox{\begin{overpic}[scale=1]{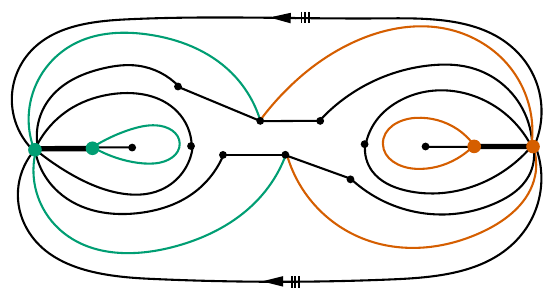}
    \put(80, 109){$\Delta_1$}
\put(160, 105){$\Delta_2$}
\put(170, 35){$\Delta_2$}
 \put(232, 78){$\Delta_2$}
\put(82, 34){$\Delta_1$}
\put(27, 78){$\Delta_1$}
\put(130, 16){$\pmb \Delta$}
\put(130, 110){$\pmb \Delta$}
 \put(190, 70){$\pmb \Delta$}
 \put(68, 69){$\pmb \Delta$}
\end{overpic}}} $
  \caption{Result.}
  \label{Fig:Final}
\end{figure}
These two sides are identified, creating a sphere.
\end{proof}

\begin{lemma}\label{Lem:NbhdEdge}
Let $M'$ be a closed manifold with a one-vertex triangulation $\mathcal{T}$ obtained from $M-K$ with distinguished edge $E$ described in \reflem{CuspNbhdSphere}. Then the complement of a tubular neighbourhood of $E$ is a manifold homeomorphic to $M-N(K)$. 
\end{lemma}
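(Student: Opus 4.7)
The plan is to reverse the insertion of $\pmb{\Delta}$ and recognize $M' \setminus N(E)$ as the standard truncated-tetrahedron triangulation of $M \setminus N(K)$ induced by $\tri$. Since the edge $E$ is an edge of the inserted tetrahedron $\pmb{\Delta}$ with both endpoints at the unique vertex $v$ of $\mathcal{T}$, any sufficiently small tubular neighborhood $N(E) \subset M'$ is a solid torus whose interior contains $v$; removing $\operatorname{int} N(E)$ therefore truncates every tetrahedron of $\mathcal{T}$ at $v$ and additionally carves a tube along $E$ out of $\pmb{\Delta}$. The resulting complement is a compact $3$-manifold with torus boundary $\partial N(E)$.

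I would then compare the piecewise structure of $M' \setminus N(E)$ with that of $M \setminus N(K)$ tetrahedron by tetrahedron. Each tetrahedron of $\tri$ appearing in $\mathcal{T}$ has all four vertices identified to $v$, so removing $N(E)$ produces a truncated tetrahedron, combinatorially identical to its counterpart in $M \setminus N(K)$. The face pairings among these truncated tetrahedra are inherited from $\tri$, except along the face $T$: in $\mathcal{T}$ the two cut copies of $T$ (from $\Delta_1$ and $\Delta_2$) are glued through $\pmb{\Delta}$ rather than directly. The main step is to verify that, after removing $N(E)$, the modified piece of $\pmb{\Delta}$ together with the identification $m_1 \sim m_2$ acts as the direct $T$-gluing of $\tri$; equivalently, the extra material contributed by $\pmb{\Delta} \setminus N(E)$ is absorbed and re-identifies $s$ with $s'$ through a single copy of the truncated face $T$, so that the face identifications of $M' \setminus N(E)$ reduce to those of $\tri$.

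The main obstacle is justifying this absorption, and I expect to do so by reversing the cusp-triangulation analysis of \reflem{CuspNbhdSphere}. There, inserting $\pmb{\Delta}$ with the $m_1 \sim m_2$ fold was shown to convert the cusp torus of $\tri$ at $v$ into a sphere by attaching the four vertex-corner triangles of $\pmb{\Delta}$ (compare \reffig{CuspNeighbourhoodGlue} and \reffig{Final}). Removing $N(E)$ reverses this: it deletes those four vertex-corner triangles and undoes the fold, restoring the cusp torus locally and re-gluing the two cut copies of $T$ back into a single face. Combined with the fact that the rest of the face pairings are preserved, this shows that $M' \setminus N(E)$ carries precisely the truncated-tetrahedron triangulation of $M \setminus N(K)$ induced by $\tri$, yielding the desired homeomorphism.
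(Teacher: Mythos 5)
Your overall strategy is the paper's: remove $N(E)$ (which truncates every tetrahedron at the single vertex), observe that every tetrahedron of $\tri$ and every face pairing away from $T$ is untouched, and reduce the lemma to showing that the remnant of $\pmb\Delta$ acts as the direct gluing of $s$ to $s'$ along $T$. The paper phrases this last point as: the truncated $\pmb\Delta$ with $N(E)$ removed is a regular neighbourhood of the face $T$ (\reffig{FoldedTetEremoved}(D)), hence deformation retracts to $T$, so inserting it between $\Delta_1$ and $\Delta_2$ is the same, up to homeomorphism, as gluing them directly along $T$.

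The gap is in how you propose to justify that key step. You want to deduce it by ``reversing the cusp-triangulation analysis of \reflem{CuspNbhdSphere}'', but that analysis lives entirely in the link of the vertex: it records the trace of the construction on $\bdy N(v)$, a $2$-complex, and cannot by itself certify the $3$-dimensional claim that $\pmb\Delta\setminus N(E)$ is a product $T\times[-1,1]$ with $s$ and $s'$ as its two ends. Recovering a torus boundary is necessary but not sufficient: a priori the piece interposed between $s$ and $s'$ could have the correct boundary trace while still contributing extra topology --- ruling that out is exactly the content of the lemma. (Also, removing $N(E)$ does not ``undo the fold'': the two $m$ faces remain identified in $M'\setminus N(E)$; rather, the folded, truncated, drilled tetrahedron collapses onto a thickened copy of the truncated triangle $T$.) What is needed, and what the paper supplies via the explicit pictures in \reffig{FoldedTetEremoved}(B)--(D), is a direct $3$-dimensional inspection of the folded tetrahedron after truncating its vertices and drilling a tube along $E$. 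If you replace the cusp-reversal argument with that inspection, the rest of your outline goes through and coincides with the paper's proof.
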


\begin{proof}
Note that endpoints of $E$ lie on the single vertex of $\mathcal{T}$, so a tubular neighbourhood of $E$ contains this vertex. Start by removing a neighbourhood of the vertex from the closed manifold $M'$.
This truncates all vertices of all tetrahedra; the truncated distinguished tetrahedron $\pmb \Delta$ is shown in \reffig{FoldedTetEremoved}(A).

Now remove a neighbourhood $N(E)$ of the distinguished edge $E$. The effect on the tetrahedron $\pmb \Delta$ is shown in \reffig{FoldedTetEremoved}(B), and again in \reffig{FoldedTetEremoved}(C), where (C) shows the tetrahedron as viewed from the cusp neighbourhood of the knot complement. The remnants of faces $m$ are shaded in these figures.

Note that the truncated tetrahedron now forms a regular neighbourhood of the original face $T$, as in \reffig{FoldedTetEremoved}(D). This deformation retracts to $T$, giving a homeomorphism to $M-N(K)$.
\end{proof}

\begin{figure}[ht]
  \input{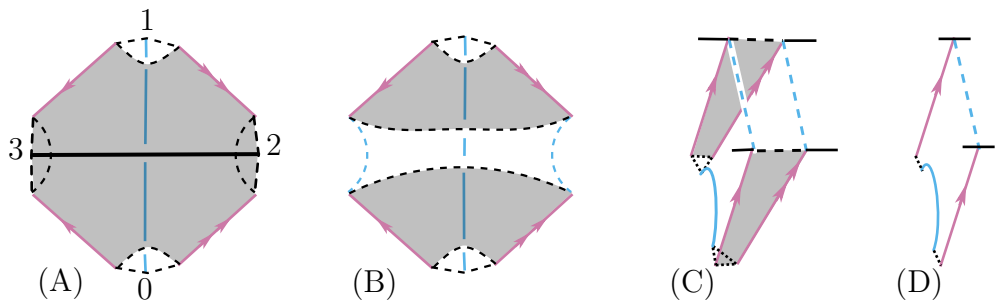}
  \caption{An illustration of the proof of Lemma~\ref{Lem:NbhdEdge}. Dotted blue lines bound a disc in $N(E)$. In (B,C), the pairs of dotted blue lines are identified.}
  \label{Fig:FoldedTetEremoved}
\end{figure}

\begin{lemma}\label{Lem:DehnFillMeridian}
Inserting tetrahedron $\pmb \Delta$ performs Dehn filling on $M-N(K)$, where the slope of the Dehn filling corresponds to the slope spanned by the triangular face $T$. Thus the Dehn filling is along a meridian, and yields $M$. 
\end{lemma}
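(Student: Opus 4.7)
The plan is to use \Cref{Lem:NbhdEdge} to identify $M'$ as a Dehn filling of $M\setminus N(K)$, and then to pin down the filling slope by exhibiting a single annulus in $M'\setminus N(E)$ whose two boundary components are simultaneously the meridian $\mu_E$ of $N(E)$ and the meridian $\mu_K$ of $N(K)$.

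First I would invoke \Cref{Lem:NbhdEdge} directly: $M'\setminus N(E)\cong M\setminus N(K)$, so $M'$ is obtained from $M\setminus N(K)$ by gluing back the solid torus $N(E)$ along the shared torus boundary. This is by definition a Dehn filling, and its slope is the image of the meridian of $N(E)$ under the identification $\partial N(E)\cong\partial N(K)$.

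Next, I would construct the required annulus. Once the identified edges $e_1, e_2$ of $T$ have been collapsed, the face $T$ becomes topologically a disc having the ideal vertex $v$ as an interior point and the third edge $e_3$ as its boundary loop. Truncating $v$ turns this disc into an annulus in $M\setminus N(K)$ whose boundary components are $e_3$ and the meridian $\mu_K$ (which is precisely the curve on $\partial N(K)$ spanned by $T$ near $v$). I expect a vertex-chase in $\pmb \Delta$ to show that $v$ is identified in $M'$ with the two vertices of $\pmb \Delta$ that are joined by the new edge $E$: namely, vertex $3$ (the hat vertex of $s$) and vertex $2$ (the hat vertex of $s'$). Consequently, the reassembled disc in $M'$ meets $E$ transversely at the single point corresponding to $v$. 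Truncating that point rather than $v$ turns the disc into an annulus in $M'\setminus N(E)$ whose two boundary components are $e_3$ and $\mu_E$, a small loop around $E$. Under the homeomorphism from \Cref{Lem:NbhdEdge}, the two annuli coincide, so $\mu_E$ and $\mu_K$ are isotopic on the common torus.

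The main obstacle is this vertex-chase — checking carefully that the hat vertex $v$ of $T$ is indeed identified with the two endpoints of the fold edge $E$, so that the reassembled disc meets $E$ transversely in exactly one point. Once that is confirmed, the Dehn filling slope is a meridian, and the final conclusion that the filled manifold is $M$ is the standard fact that meridian Dehn filling of $M\setminus N(K)$ recovers $M$.
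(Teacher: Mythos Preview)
Your plan is correct and is essentially the paper's argument: both invoke \Cref{Lem:NbhdEdge} to recognise the insertion of $\pmb\Delta$ as a Dehn filling and then identify the slope by tracing a meridian of $N(E)$ through the retraction onto $T$, the paper doing so pictorially via the blue dotted meridian curve in \reffig{FoldedTetEremoved} while you use the annulus cut from the hat triangle as the complementary witness. Your vertex-chase goes through exactly as you expect: the $m$-fold in $\pmb\Delta$ identifies edges $03\sim 13$ and $02\sim 12$, so $s$ and $s'$ become cones with apexes at the endpoints $3$ and $2$ of $E$, and in the vertex link at $3$ the folded link-triangle is a disc bounded by the link circle of $s$ and containing the end of $E$, so that circle is indeed a meridian of $N(E)$.
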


\begin{proof}
By \reflem{NbhdEdge}, the manifold obtained by adding the tetrahedron $\pmb \Delta$ of \reffig{Tetrahedron} is obtained from $M-N(K)$ by inserting a solid torus equal to a neighbourhood of the edge $E$. Inserting a solid torus into $M-N(K)$ is a Dehn filling. It remains to check that the Dehn filling slope is the meridian, to yield $M$. Note that the Dehn filling slope is the unique non-trivial slope on $\bdy N(K)$ that bounds a disc after Dehn filling. 

A meridian of $M-N(K)$ is shown as the thick orange vertical edge on the left and right of the fundamental region in \reffig{CuspNeighbourhoodCut}(A). Gluing in $\pmb \Delta$ with $N(E)$ removed inserts this truncated tetrahedron as a neighbourhood of the face $T$, with the boundary of a disc shown as the blue dotted line in \reffig{FoldedTetEremoved}(B), (C), and (D). 
Note this is exactly the slope spanned by $T$.
\end{proof}

\begin{proof}[Proof of \refthm{IdealToFiniteTriangulation}]
By \reflem{DehnFillMeridian}, inserting $\pmb \Delta$ performs meridional Dehn filling on the manifold $M-K$, so the result is $M$. Moreover, it yields a triangulation of $M$ with just one vertex, which is shown in \reflem{CuspNbhdSphere}. 

By \reflem{NbhdEdge}, the complement of a regular neighbourhood of the distinguished edge is homeomorphic to $M-N(K)$. A regular neighbourhood of the union of the edge and the single vertex forms a solid torus in $M$. Since the meridian of the solid torus is mapped to the meridian of $M-N(K)$, the homeomorphism from $M-N(E)$ to $M-N(K)$ extends to a homeomorphism of $M$, taking $E$ to $K$. Therefore, the edge forms the knot $K$.
\end{proof}

\section{Triangulations of knot complements with a meridional triangle}
\label{sec:mertriangle}

We first apply \refthm{IdealToFiniteTriangulation} to an infinite family of (hyperbolic) knot complements with known triangulations before utilising the machinery of hyperbolic fully augmented links to apply the theorem to complements of arbitrary knots. 

\begin{proposition}\label{Prop:2Bridge}
Let $K$ be a hyperbolic $2$-bridge knot (i.e.\ not a $(2,q)$-torus knot). There exists a one-vertex triangulation of $S^3$ with an edge forming $K$.
\end{proposition}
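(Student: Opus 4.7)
The plan is to apply \refthm{IdealToFiniteTriangulation} to a known ideal triangulation of $S^3 - K$, which reduces the problem to exhibiting a single hat triangle spanning a meridian. For hyperbolic $2$-bridge knots, the natural candidate is the layered ideal triangulation of Sakuma--Weeks (cf.\ the Gu\'eritaud--Futer construction), determined by the negative continued fraction expansion of the $2$-bridge slope $p/q$. This approach generalises the case of twist knots already treated by Aribi, Gu\'eritaud, and Piguet-Nakazawa.

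The first step is to set up the Sakuma--Weeks triangulation: a chain $\Delta_1, \dots, \Delta_n$ of ideal tetrahedra, where $n$ is controlled by the continued fraction of $p/q$; consecutive tetrahedra are glued along pairs of faces, and the chain is closed at each end by a fold which identifies the two remaining outer faces of the terminal tetrahedron. The hypothesis that $K$ is hyperbolic (equivalently, not a $(2,q)$-torus knot) guarantees $n \geq 2$, so the terminal tetrahedron and its neighbour in the chain are distinct.

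The second step is to identify a meridional hat triangle. I would focus on the face $T$ of the terminal tetrahedron that is shared with the next tetrahedron in the chain, and inspect the cusp triangulation on $\bdy N(K)$ in a horospherical neighbourhood of the ideal vertex of $T$ at which the end fold acts. The fold identifies two ideal edges of $T$ meeting at this vertex in an orientation-preserving manner, producing a hat triangle whose trace on $\bdy N(K)$ is a closed loop. Using the standard description of how the meridian slope sits in the Sakuma--Weeks cusp triangulation, one verifies that this loop is the meridian of $K$. Since $T$ lies between two distinct tetrahedra, the adjacency hypothesis of \refthm{IdealToFiniteTriangulation} is automatic, and the theorem then yields the desired one-vertex triangulation of $S^3$ with an edge forming $K$.

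The main obstacle I anticipate is the second step: the combinatorial bookkeeping required to confirm that the loop traced out by the candidate hat triangle is precisely the meridian, rather than a longitude or some $(1,k)$-slope. This requires a careful reading of the peripheral curves in the Sakuma--Weeks model, possibly with separate attention to the parity of the continued fraction. In configurations where the terminal fold does not directly yield a meridional hat triangle, a local $2$--$3$ Pachner move near the end of the chain should create one without disturbing the overall structure, so the argument is robust to such edge cases.
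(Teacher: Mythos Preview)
Your proposal is correct and follows essentially the same route as the paper: use the Sakuma--Weeks layered triangulation of the hyperbolic $2$-bridge knot complement, locate a hat triangle spanning a meridian near a hairpin turn (the paper simply cites \cite[Figure~19]{GueritaudFuter:2Bridge} and \cite[Figure~10.12]{Purcell:HypKnotTheory} for this), and apply \refthm{IdealToFiniteTriangulation}. Your concern about the bookkeeping and the fallback Pachner move is unnecessary here---the meridional hat triangle is present directly in the standard triangulation---but your observation that $n\geq 2$ ensures the face lies between two distinct tetrahedra is exactly the adjacency hypothesis needed.
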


\begin{proof}
The well-known Sakuma--Weeks triangulations of hyperbolic $2$-bridge knots, studied in \cite{SakumaWeeks} and shown to be geometric in~\cite{GueritaudFuter:2Bridge}, always contain a face spanning a meridian. The tetrahedron containing this face lies between the two so-called ``hairpin turns''. See, for example~\cite[Figure~19]{GueritaudFuter:2Bridge}, or~\cite[Figure~10.12]{Purcell:HypKnotTheory}. \refthm{IdealToFiniteTriangulation} then proves the result. 
\end{proof}

We generalise \refprop{2Bridge} to any knot in the 3-sphere by relating knots to Dehn fillings of {\em hyperbolic fully augmented links} --- a well-studied tool in hyperbolic geometry.

In the remainder of this section, we first recall the definition of a fully augmented link. We then show that every knot complement in the 3-sphere is obtained by Dehn filling a hyperbolic fully augmented link. Finally, we prove that a hyperbolic fully augmented link and its Dehn fillings can be triangulated to ensure a face spans a meridian. The construction is explicit at every stage, allowing us to bound the number of tetrahedra. 

\begin{definition}[Fully augmented link]\label{Def:FAL}
A \emph{flat fully augmented link} in $S^3$ is a link with a diagram consisting of two types of components: \emph{Knot strands} are closed curves embedded in the plane of projection; \emph{crossing circles} are simple unknots meeting the plane of projection orthogonally, bounding a disc that is punctured by the knot strands exactly twice. A \emph{fully augmented link} is obtained from a flat fully augmented link by inserting one or zero crossings into the knot strands in a neighbourhood of a crossing circle.
\end{definition}

An example of a flat fully augmented link is shown in \reffig{FALExample}(A), and a more general fully augmented link in \reffig{FALExample}(D).

\begin{figure}
\begingroup%
  \makeatletter%
  \providecommand\color[2][]{%
    \errmessage{(Inkscape) Color is used for the text in Inkscape, but the package 'color.sty' is not loaded}%
    \renewcommand\color[2][]{}%
  }%
  \providecommand\transparent[1]{%
    \errmessage{(Inkscape) Transparency is used (non-zero) for the text in Inkscape, but the package 'transparent.sty' is not loaded}%
    \renewcommand\transparent[1]{}%
  }%
  \providecommand\rotatebox[2]{#2}%
  \newcommand*\fsize{\dimexpr\f@size pt\relax}%
  \newcommand*\lineheight[1]{\fontsize{\fsize}{#1\fsize}\selectfont}%
  \ifx\svgwidth\undefined%
    \setlength{\unitlength}{356.53951263bp}%
    \ifx\svgscale\undefined%
      \relax%
    \else%
      \setlength{\unitlength}{\unitlength * \real{\svgscale}}%
    \fi%
  \else%
    \setlength{\unitlength}{\svgwidth}%
  \fi%
  \global\let\svgwidth\undefined%
  \global\let\svgscale\undefined%
  \makeatother%
  \begin{picture}(1,0.23752244)%
    \lineheight{1}%
    \setlength\tabcolsep{0pt}%
    \put(0,0){\includegraphics[width=\unitlength,page=1]{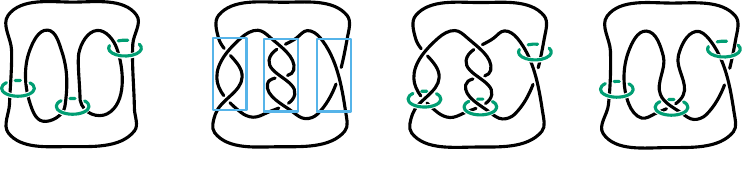}}%
    \put(0.06948597,0.00690959){\color[rgb]{0,0,0}\makebox(0,0)[lt]{\lineheight{1.25}\smash{\begin{tabular}[t]{l}(A)\end{tabular}}}}%
    \put(0.36354254,0.00590253){\color[rgb]{0,0,0}\makebox(0,0)[lt]{\lineheight{1.25}\smash{\begin{tabular}[t]{l}(B)\end{tabular}}}}%
    \put(0.6294019,0.00892367){\color[rgb]{0,0,0}\makebox(0,0)[lt]{\lineheight{1.25}\smash{\begin{tabular}[t]{l}(C)\end{tabular}}}}%
    \put(0.88519086,0.00590253){\color[rgb]{0,0,0}\makebox(0,0)[lt]{\lineheight{1.25}\smash{\begin{tabular}[t]{l}(D)\end{tabular}}}}%
  \end{picture}%
\endgroup%

  \caption{(A) A flat fully augmented link. (B) A knot with three twist regions, with twist regions marked in blue. (C) Add crossing circles to augment the knot. (D) The corresponding fully augmented link.}
  \label{Fig:FALExample}
\end{figure}

Under mild restrictions, fully augmented links are hyperbolic with very explicit hyperbolic geometry; see~\cite{Lackenby:AltVolume, FuterPurcell, Purcell:FullyAugmented, Purcell:Cusps}. For this reason, they are well-studied in hyperbolic knot theory. Our application does not require any of their geometric properties, but uses geometry to imply the existence of a triangulation.

\begin{proposition}\label{Prop:AugGivesHyperbolic}
Any knot $K$ in $S^3$ can be obtained from some hyperbolic fully augmented link by Dehn filling the crossing circle components.
\end{proposition}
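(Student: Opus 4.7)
The plan is to construct, explicitly from a diagram of $K$, a fully augmented link $L$ whose hyperbolicity will follow from known results and such that $S^3 \setminus K$ is recovered from $S^3 \setminus L$ by Dehn filling all of the crossing-circle components.

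First, I would fix a diagram $D$ of $K$ that is prime, twist-reduced, and contains at least two twist regions, where a \emph{twist region} is a maximal string of bigons in the diagram between two strands. For the bulk of knots, a minimal crossing diagram already satisfies these conditions; in the degenerate cases (notably the unknot and $(2,q)$-torus knots) a minimal diagram has too few twist regions, but we can apply Reidemeister moves to introduce additional cancelling twist regions while preserving the knot type. This preparation step is the only place where non-generic knots require separate treatment.

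Second, I would augment $D$ by inserting a crossing circle encircling the two strands of each twist region, as depicted in \reffig{FALExample}(B,C). Performing Dehn twists along the disc bounded by each crossing circle then removes all but zero or one crossings from its twist region, producing a fully augmented link $L$ in the sense of \refdef{FAL}. By construction, if the $i$-th twist region originally contained $n_i$ half-twists, then Dehn filling the $i$-th crossing circle of $L$ along the slope $1/\lfloor n_i/2 \rfloor$ reinstates those twists; performing all of these fillings simultaneously yields precisely $S^3 \setminus K$.

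Third, I would invoke the standard hyperbolicity theorem for fully augmented links \cite{Lackenby:AltVolume, FuterPurcell, Purcell:FullyAugmented}: any fully augmented link whose underlying diagram is prime, twist-reduced, and has at least two twist regions has hyperbolic complement. Applied to our $L$ this produces the desired hyperbolic fully augmented link, completing the proof.

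The main obstacle is the first step: verifying that \emph{every} knot, including the unknot and torus knots, admits a prime, twist-reduced diagram with $\geq 2$ twist regions. This is elementary but needs explicit diagrammatic moves rather than a single clean construction, and it is the one place where the argument is not uniform across all $K$.
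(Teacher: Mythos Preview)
Your overall strategy---augment a diagram and invoke the hyperbolicity criterion for fully augmented links---matches the paper. The gap is in your first step, which you flag as an obstacle but call ``elementary''. It is not: the obvious Reidemeister moves do not produce a prime, twist-reduced diagram with $\geq 2$ twist regions. Inserting cancelling crossings by an R2 move into the standard $(2,q)$-torus knot diagram destroys primeness (a curve just outside the new bigon meets the diagram twice with crossings on both sides), and R1 kinks fail similarly. Whether every knot even admits such a diagram is not obvious, and you have not produced one. You also do not address composite knots, whose natural diagrams are non-prime; your proposal simply assumes a prime diagram can be found.

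The paper closes this gap by modifying the \emph{augmentation} rather than the diagram. For $(2,q)$-torus knots it keeps the standard one-twist-region diagram but inserts $|q|$ crossing circles, one through each bigon; this link complement coincides with that of the augmentation of a prime twist-reduced pretzel diagram with $|q|$ twist regions, hence is hyperbolic, and $1/0$ fillings recover the torus knot. For a non-prime diagram it augments each piece separately and then adds one further crossing circle straddling the connect-sum curve $\gamma$, which makes the augmented diagram prime and twist-reduced, and proceeds by induction on the number of such curves. These two explicit constructions are what your argument is missing.
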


\begin{proof}
When the knot is hyperbolic, this is well known. We step through the parts of the proof that we need. 

Let $K$ be any knot or link in $S^3$. Take a diagram of $K$ and consider its \emph{twist regions}: these are regions of the diagram where two strands twist maximally, forming a collection of bigons arranged end to end. A single crossing adjacent to no bigons is also a twist region. Figure~\ref{Fig:FALExample}(B) shows a knot diagram with three twist regions indicated. A diagram is \emph{twist reduced} if, whenever a simple closed curve on the plane of projection meets the diagram in four points, adjacent to two crossings, the curve bounds a region of the plane containing a (possibly empty) twist region (see also~ \cite[Figure~6]{Purcell:FullyAugmented}). Any diagram can be modified by flypes to be twist reduced.

Augment a twist reduced diagram by inserting a simple unknot called a \emph{crossing circle} encircling each twist region, bounding a disc that punctures the diagram twice, as in \reffig{FALExample}(C). Note that there are two ways to do this if the twist region contains a single crossing; either one is suitable. Next, adjust the diagram by removing an even number of crossings in each twist region, so that each crossing circle is adjacent to either zero or one crossings, depending on whether the original twist region had an even or an odd number of crossings. See \reffig{FALExample}(D). Note that the link obtained by removing crossings in pairs has complement homeomorphic to the link with added crossing circles, via a cut--twist--reglue homeomorphism (as in \cite[Proposition~7.2]{Purcell:HypKnotTheory}).  The original link is obtained by performing $1/n_i$ Dehn filling on the $i$th crossing circle, where $n_i$ is an integer such that $2|n_i|$ crossings were removed to form the fully augmented link, and the sign is chosen to ensure the crossings have the correct sign.

The fully augmented link constructed above is hyperbolic if and only if before adding crossing circles (or \emph{augmenting}), it came from a non-splittable, prime, twist reduced diagram with at least two twist regions; see \cite[Theorem~6.1]{Purcell:Cusps}. Because we are working with knots, all diagrams are non-splittable. As noted above, by performing flypes we may assume our diagrams are twist reduced. Thus it remains to consider separately the case that there is only one twist region, and the case that the diagram is not prime. 

In the case of a single twist region, the knot is a $(2,q)$-torus knot, $|q|> 2$. Given the standard diagram of the $(2,q)$-torus knot, we insert $q$ crossing circles, one per crossing, meeting bigon regions of the diagram rather than encircling twist regions; see \reffig{FALT2q}, left. 
This link complement is homeomorphic to the link complement where $2n$ crossings are added adjacent to each crossing circle, $n \geq 3$. Thus the link complement is also obtained from augmenting a prime, twist reduced diagram with $|q|>2$ twist regions.\footnote{This is the augmented diagram of a pretzel link with $|q|>2$ twist regions and $2n+1$ crossings per twist region.} Then \cite[Theorem~6.1]{Purcell:Cusps} proves that the fully augmented link is hyperbolic. 
Performing $1/0$ Dehn fillings on all crossing circles in the original fully augmented link diagram returns the original $(2,q)$-torus knot.

\begin{figure}[ht]
  \includegraphics{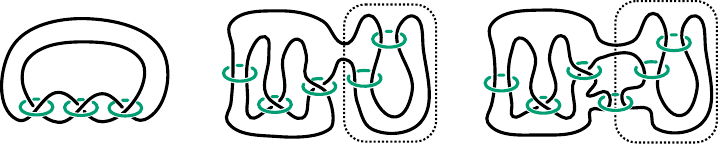}
  \caption{Left to right: augmenting a $(2,q)$-torus knot to a hyperbolic fully augmented link; an example of a fully augmented link with a diagram that is not prime; adjustment of the diagram to produce a hyperbolic fully augmented link.}
  \label{Fig:FALT2q}
\end{figure}

If the diagram is not prime, by definition there exists a simple closed curve $\gamma$ in the plane of the diagram, meeting the diagram exactly twice, bounding discs on either side in the plane of projection that contain crossings. When we augment as above, the curve $\gamma$ meets exactly two knot strands of the fully augmented link, and the discs on either side contain crossing circles. This is a connected sum of augmented links $L_1$ and $L_2$, connected along two knot strands. We show that in the case that $L_1$ and $L_2$ are hyperbolic fully augmented links we may modify the construction to obtain a new hyperbolic fully augmented link, with the original knot obtained from Dehn filling this new link. The result then follows by induction on the number of curves $\gamma$.

For the modification, the connected sum of $L_1$ and $L_2$ occurs along two knot strands bounding regions of each diagram that are merged after the connect sum procedure. Take knot strands from each of $L_1$ and $L_2$ bounding adjacent regions of the diagram, and isotope them to run up to either side of the curve $\gamma$. Add a crossing circle between these two strands, with the crossing circle piercing $\gamma$. An example of this is shown on the right of \reffig{FALT2q}, where $\gamma$ is given as the dotted line. We claim that the resulting augmented link is a prime, twist reduced diagram. Twist reduced follows from the fact that $L_1$ and $L_2$ arise from twist reduced diagrams, and the newly added crossing circle cannot be parallel to an existing crossing circle since it straddles the two links. For primeness, the curve $\gamma$ no longer meets the diagram exactly twice. 
Suppose for contradiction that $\delta$ is a simple closed curve in the plane of the diagram meeting the link diagram exactly twice, bounding discs on either side in the plane of projection that contain crossings. No such $\delta$ exists that avoids $\gamma$, since such a $\delta$ must lie in the diagram of $L_1$ or $L_2$, which we have assumed to be hyperbolic (hence prime) and twist-reduced. But if $\delta$ runs through a neighbourhood of $\gamma$,
our construction ensures that every such curve meets the diagram more than twice, yielding the desired contradiction.
\end{proof}

\begin{lemma}\label{Lem:SubdivideFAL}
A fully augmented link complement can be subdivided into a triangulation with at most $2 \cdot(6c-4)$ ideal tetrahedra, where $c$ is the number of crossing circles.
In this triangulation, each cusp neighbourhood of a crossing circle is met by exactly four tetrahedra, meeting in one ideal vertex. 
For each crossing circle, there are four faces of the triangulation that, together, form the twice-punctured disc bounded by a crossing circle. 
\end{lemma}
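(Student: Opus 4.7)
The plan is to use the Agol--Thurston polyhedral decomposition of a flat fully augmented link complement and then subdivide each polyhedron into ideal tetrahedra in a controlled way. The non-flat case (with half-twists) reduces to the flat case via the cut--twist--reglue homeomorphism, which preserves the combinatorial triangulation. For the flat case, cutting the complement along the plane of projection and along the twice-punctured discs produces two identical ideal polyhedra $P_1, P_2$. Each polyhedron has \emph{shaded faces} arising from halves of the twice-punctured discs and \emph{white faces} arising from regions of the link's planar diagram complement. This accounts immediately for the four shaded faces per twice-punctured disc (two in each $P_i$) claimed in the lemma.

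Next, I would triangulate each $P_i$. Any non-triangular shaded face can first be subdivided into triangles by adding a diagonal, using the same choice of diagonal in $P_1$ and $P_2$ so that the four shaded triangles of each disc fit together to tile it. Each white face is then triangulated by coning from an ideal vertex on its boundary. Finally, the whole polyhedron is subdivided into ideal tetrahedra by coning from a single chosen ideal vertex --- crucially, one that does not lie on any crossing-circle cusp. This constraint ensures that, at each crossing-circle cusp (which becomes a single ideal vertex in the final triangulation), exactly two tetrahedra per polyhedron meet at the cusp vertex, giving four in total, and fulfilling the second claim of the lemma.

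It remains to bound the tetrahedron count. Let $V, E, F$ denote the numbers of ideal vertices, edges, and triangular faces on the triangulated boundary of $P_i$. Euler's formula $V - E + F = 2$ combined with $3F = 2E$ for triangulated $2$-spheres yields $F = 2V - 4$, and coning from the chosen vertex of boundary-degree $d$ produces $F - d$ tetrahedra. The main obstacle will be showing $F - d \leq 6c - 4$; that is, showing the coning vertex can be chosen with sufficiently high degree. This will need a careful bookkeeping using the specific combinatorics of fully augmented link diagrams (where $V$ is linear in $c$, with explicit contributions from crossing-circle arcs and knot-strand arcs). Verifying the existence of a suitable vertex in all cases, and handling edge cases such as diagrams with few regions or with shaded faces sharing many edges, will be the most delicate part of the argument.
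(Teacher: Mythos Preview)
Your proposal contains a genuine gap in establishing the second claim of the lemma, that each crossing-circle cusp is met by exactly four tetrahedra. When you cone $P_i$ from a vertex $v$, the tetrahedra meeting a crossing-circle vertex $w\neq v$ are exactly those of the form $v*f$ with $w\in f$; there is one for each triangular face incident to $w$ that does not also contain $v$. In the polyhedron, $w$ is surrounded by two shaded triangles and two white faces, so after any face-triangulation the degree of $w$ is at least $4$. Hence if $v$ is not adjacent to $w$ you obtain at least four tetrahedra at $w$ in \emph{each} of $P_1$ and $P_2$, i.e.\ at least eight in total. You can force the count down to two per polyhedron only when $v$ is joined to $w$ by an edge; but a single cone vertex $v$ cannot be adjacent to all $c$ crossing-circle vertices simultaneously, so your construction does not satisfy the lemma for $c\geq 2$.

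The paper avoids this by treating the crossing-circle vertices separately \emph{before} coning. At each such vertex one slices off the rectangular pyramid bounded by the two shaded triangles and the two adjacent white faces, and subdivides it into two tetrahedra by a diagonal; doing this in both polyhedra yields precisely the four tetrahedra of the lemma and removes the crossing-circle vertices from the picture entirely. Only then are the remaining polyhedra (now with $2c$ ideal vertices each) triangulated by coning, giving $4c-4$ tetrahedra apiece and the total $2(6c-4)$. Incidentally, your worry about the inequality $F-d\le 6c-4$ is misplaced: since the original polyhedron has $3c$ ideal vertices, Euler already gives $F=6c-4$, and coning from any vertex only decreases this. The counting is the easy part; the structural control at the crossing-circle cusps is what requires the extra idea.
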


Note that the tetrahedra count in \Cref{Lem:SubdivideFAL} is optimised for links with a small number of twist regions relative to the overall crossing number. For links where the average number of crossings per twist region is small, there are likely modifications of our argument leading to a smaller tetrahedra count. But we do not focus on this in this paper. See \Cref{Sec:trefoils} for some discussions and clarifying remarks.

\begin{proof}[Proof of \Cref{Lem:SubdivideFAL}]
Consider the standard geometric decomposition of a hyperbolic fully augmented link into identical convex, right angled, ideal hyperbolic polyhedra. This was first explained in ~\cite[Appendix]{Lackenby:AltVolume}; see also~\cite{Purcell:FullyAugmented}. Note that in \cite[Appendix]{Lackenby:AltVolume}, the authors proceed to refine this decomposition into a triangulation with $10(c-1)$ tetrahedra, which is different from the triangulation we present here.

In short, the decomposition is illustrated in \reffig{FALDecomp}, which first appeared in ~\cite{HamPurcell}. First slice through each 2-punctured disc bounded by a crossing circle, and then rotate by $180^\circ$ where necessary to untwist any single crossings of knot strands. Next cut along the plane of projection, separating the link complement into two identical pieces. Then shrink remnants of the link to ideal vertices. The resulting ideal polyhedra admit a complete convex hyperbolic structure, with faces coming from the projection plane corresponding to a circle packing, and the 2-punctured discs bounded by crossing circles corresponding to a dual circle pattern.
\begin{figure}
  \includegraphics{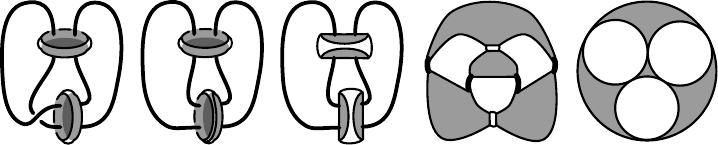}
  \caption{The decomposition of a hyperbolic fully augmented link into polyhedra, from~\cite{HamPurcell}.}
  \label{Fig:FALDecomp}
\end{figure}

Observe that four shaded ideal triangles make up each crossing disc in the decomposition: two from each of the two identical polyhedra, with each pair meeting in ``bow ties.'' These are the shaded triangles in \reffig{FALDecomp}. 

We subdivide these polyhedra as in \cite[Proposition~2.3]{HamPurcell} without affecting the shaded triangles. We briefly step through the details to confirm the required properties.

Choose a crossing circle. This circle corresponds to an ideal vertex in each of the two identical convex hyperbolic polyhedra making up the decomposition. Denote these polyhedra by $P_1$ and $P_2$. We first focus on one of the polyhedra, say, $P_1$. Take the ideal vertex corresponding to a crossing circle to infinity in the hyperbolic structure. The two adjacent shaded triangles lift to lie on parallel vertical hyperbolic planes, which lie over parallel Euclidean lines on the boundary at infinity $\bdy_\infty\HH^3$. The two adjacent white faces, coming from faces of the plane of projection, lift to vertical planes lying over parallel Euclidean lines that meet the shaded faces orthogonally, forming a rectangle on $\bdy_\infty\HH^3$ as in \reffig{CrossingDiscsFlat}(B).

\begin{figure}
\begingroup%
  \makeatletter%
  \providecommand\color[2][]{%
    \errmessage{(Inkscape) Color is used for the text in Inkscape, but the package 'color.sty' is not loaded}%
    \renewcommand\color[2][]{}%
  }%
  \providecommand\transparent[1]{%
    \errmessage{(Inkscape) Transparency is used (non-zero) for the text in Inkscape, but the package 'transparent.sty' is not loaded}%
    \renewcommand\transparent[1]{}%
  }%
  \providecommand\rotatebox[2]{#2}%
  \newcommand*\fsize{\dimexpr\f@size pt\relax}%
  \newcommand*\lineheight[1]{\fontsize{\fsize}{#1\fsize}\selectfont}%
  \ifx\svgwidth\undefined%
    \setlength{\unitlength}{257.87768555bp}%
    \ifx\svgscale\undefined%
      \relax%
    \else%
      \setlength{\unitlength}{\unitlength * \real{\svgscale}}%
    \fi%
  \else%
    \setlength{\unitlength}{\svgwidth}%
  \fi%
  \global\let\svgwidth\undefined%
  \global\let\svgscale\undefined%
  \makeatother%
  \begin{picture}(1,0.33450628)%
    \lineheight{1}%
    \setlength\tabcolsep{0pt}%
    \put(0,0){\includegraphics[width=\unitlength,page=1]{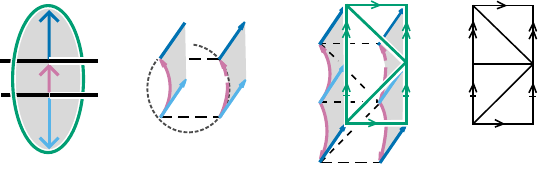}}%
    \put(0.00984524,0.01059742){\color[rgb]{0,0,0}\makebox(0,0)[lt]{\lineheight{1.25}\smash{\begin{tabular}[t]{l}(A)\end{tabular}}}}%
    \put(0.27074449,0.0096129){\color[rgb]{0,0,0}\makebox(0,0)[lt]{\lineheight{1.25}\smash{\begin{tabular}[t]{l}(B)\end{tabular}}}}%
    \put(0.51920666,0.00764386){\color[rgb]{0,0,0}\makebox(0,0)[lt]{\lineheight{1.25}\smash{\begin{tabular}[t]{l}(C)\end{tabular}}}}%
    \put(0.8781967,0.01059742){\color[rgb]{0,0,0}\makebox(0,0)[lt]{\lineheight{1.25}\smash{\begin{tabular}[t]{l}(D)\end{tabular}}}}%
    \put(0,0){\includegraphics[width=\unitlength,page=2]{CrossingDisc_Flat1.pdf}}%
  \end{picture}%
\endgroup%

  \caption{(A) Three important ideal edges on a crossing disc. (B) Lifting the ideal vertex corresponding to a crossing circle to infinity in the upper half space model, 
	viewed from $\infty$, with the green rectangle closest to the viewer and highest in upper half space. 
	The dashed circle  indicates a hyperbolic plane that we use to cut off two triangles. (C) A fundamental region with choice of triangulation of this cusp. (D) A cusp triangulation.}
  \label{Fig:CrossingDiscsFlat}
\end{figure}

Because the vertical white and shaded faces meet the crossing circle cusp in a rectangle, there is a Euclidean circle meeting each of its corners, slicing off a geodesic plane in $\HH^3$. This slices $P_1$ into two parts: a rectangular pyramid with base the geodesic plane, giving a cusp neighbourhood of the crossing circle ideal vertex, and a (possibly degenerate) convex ideal polyhedron in its complement, as in~\cite[Lemma~2.2]{HamPurcell}. Triangulate the cusp neighbourhood by choosing a diagonal. 

Now consider $P_2$. A face pairing of a white side glues this polyhedron to the original, giving a fundamental region for the link complement. Slice off a geodesic plane similarly,
and triangulate the cusp by reflecting the diagonal, as shown in \reffig{CrossingDiscsFlat}(C). This gives the cusp triangulation as claimed.

This completes the construction of the crossing circle cusp triangulation in the flat case. In the case that the crossing circle meets a crossing, or half-twist, the cusp shape is sheared under the face pairing identification, as in \reffig{CrossingDiscHalfTwist}. The cusp is still triangulated as claimed.
\begin{figure}
  \includegraphics{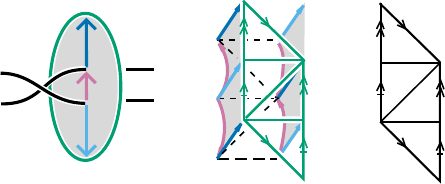}
  \caption{When the crossing disc meets a half twist, the fundamental region is the same, but the face pairing shears the cusp torus. The cusp triangulation is shown on the right.}
  \label{Fig:CrossingDiscHalfTwist}
\end{figure}

It remains to subdivide the remainder of the polyhedra, and to count the number of tetrahedra. First note that, by \cite[Lemma~2.2]{HamPurcell}, the remnant may be degenerate. In this case, the original fully augmented link had only two crossing circles, hence there are $8 = 2\cdot 4$ tetrahedra in total. See \cite[Figure~3]{HamPurcell} and \cite[Figure~ 15]{Lackenby:AltVolume} for two informative illustrations of this situation.

Otherwise, after inserting the two ideal tetrahedra of $P_1$ incident to a given crossing circle cusp, the corresponding bow tie of shaded faces, together with triangular portions of the adjacent white faces thickening the bow tie into a rectangle, are covered by two triangles triangulating this rectangle. In particular, after covering all $c$ ideal vertices corresponding to crossing circles this way, the modified version of $P_1$, as seen from its centre, now has $2c$ remaining ideal vertices and, after triangulating all of its faces, $4c-4$ triangles. Reflecting through the white faces shows the same result for $P_2$. Denote the resulting polyhedra by $P_1'$ and $P_2'$.

We now triangulate each of $P_1'$ and $P_2'$ into $4c-4$ tetrahedra by coning over their centres. This yields an overall triangulation of $2\cdot (4c-4)+4c = 2\cdot (6c-4)$ tetrahedra, as desired,
although we have added new material vertices.  
To obtain an ideal triangulation, instead cone over an existing vertex. In fact, this yields an improvement of at least $8$ tetrahedra in the non-degenerate case. 
However, the bound proven here is the right order of magnitude.
\end{proof}

\begin{lemma}\label{Lem:PrismFAL}
The triangulation of \reflem{SubdivideFAL} can be adjusted such that 
\begin{enumerate}
\item for each crossing circle, there is a triangular face spanning a meridian of the link;
\item each crossing circle cusp meets exactly two tetrahedra in exactly one ideal vertex.
\end{enumerate}
The adjustment replaces the four tetrahedra adjacent to a crossing circle by five tetrahedra if the crossing circle meets no crossing, and four tetrahedra if it is adjacent to a crossing.
\end{lemma}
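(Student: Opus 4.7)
The plan is to locally retriangulate the four tetrahedra meeting each crossing circle cusp in the triangulation from \reflem{SubdivideFAL}, leaving the triangulation outside this local region untouched. The local region is a ball $B$ containing the crossing disc and a cusp neighbourhood of the crossing circle. After retriangulating, the crossing disc will be represented by a single hat triangle spanning the meridian, and only two tetrahedra will meet the crossing circle cusp.

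First I would record the combinatorics at the cusp. From \reflem{SubdivideFAL}, the cusp cross-section is a fundamental rectangle tiled by four triangles whose two diagonals arise from the two bow ties of shaded crossing-disc faces; the meridian of the crossing circle runs parallel to one pair of rectangle sides, transverse to the crossing-disc diagonals. The boundary $\partial B$ is a triangulated 2-sphere made from the non-shaded (white) faces of the four tetrahedra, together with the gluings that come from the faces of $P_1'$ and $P_2'$ adjacent to the bow ties.

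Next I would define the replacement triangulation of $B$. In the flat case, introduce a new ideal edge $e^*$ at the crossing circle cusp running along the meridional direction, and place two tetrahedra $\tau_1,\tau_2$ around this edge so that their shared face $T$ is a hat triangle: the two edges of $T$ at the crossing circle cusp are identified and together with $e^*$ form the meridional loop on the cusp. Three additional tetrahedra are inserted to complete a triangulation of $B$ agreeing with $\partial B$; none of these meet the crossing circle cusp. In the half-twist case, the shear in the cusp identification already supplies one of the required edge identifications, so one of the three auxiliary tetrahedra becomes redundant and only two are needed, for a total of four new tetrahedra.

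The main obstacle is verifying the explicit combinatorics of the replacement: each of the five (or four) new tetrahedra must be described by its vertex labels and face identifications in a way that matches $\partial B$, realises $T$ as a hat triangle spanning the meridian, and keeps exactly two tetrahedra incident to the crossing circle cusp at a single ideal vertex. The meridian condition is verified by tracking the boundary curve of $T$ at the cusp against the meridian curve identified in the first step, and the half-twist case requires an additional check that the sheared identification is respected by the new configuration.
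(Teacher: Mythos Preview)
There are two genuine errors in your setup that would prevent the argument from going through.

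First, the region $B$ formed by the four tetrahedra meeting a crossing circle cusp is not a ball, and its interface with the rest of the triangulation is not a $2$-sphere. As the paper makes explicit, removing these four tetrahedra leaves an object with \emph{$2$-punctured torus} boundary (triangulated by four triangles, as in \reffig{CrossingDiscsFlat} and \reffig{CrossingDiscHalfTwist}). The four tetrahedra together form a neighbourhood of the crossing circle cusp sitting over this once-punctured torus; the crossing disc (four shaded triangles) is interior to this piece. Your replacement therefore cannot be described as ``triangulating a ball to match a triangulated $\partial B$''; it must instead fill back to a triangulated once-punctured torus and reintroduce the crossing circle cusp.

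Second, and more importantly, you have the wrong meridian. You place the hat triangle $T$ so that its two identified edges sit at the \emph{crossing circle} cusp and $T$ spans a meridian of the crossing circle. But the face required by the lemma must span a meridian of a \emph{knot strand}: in the paper's construction the hat triangle is one of the shaded faces on the crossing disc, its two identified (pink) edges run between the two punctures of that disc, and the vertex where they meet is on a knot strand, where the face encircles a knot-strand meridian. This is essential downstream in \refprop{FullyAugLinks}: after Dehn filling all crossing circles, this face must survive as a meridional hat triangle for $K$. A face spanning a crossing-circle meridian would be destroyed by that very Dehn filling.

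The paper's actual mechanism is rather different from yours: it removes the four tetrahedra, then in the flat case attaches a triangulated triangular prism (three tetrahedra) onto the $2$-punctured torus so that its two triangular ends are the shaded crossing-disc faces, now hat triangles at a knot-strand cusp; two further tetrahedra then cone to the crossing circle cusp. In the half-twist case, two layered tetrahedra already produce the required shaded hat faces, and again two tetrahedra cone to the cusp. Your counts ($3+2$ and $2+2$) happen to match, but the geometry and the location of the meridian do not.
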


\begin{proof}
Remove the four tetrahedra adjacent to a crossing circle. This leaves an object with 2-punctured torus boundary, triangulated as in Figures~\ref{Fig:CrossingDiscsFlat} and~\ref{Fig:CrossingDiscHalfTwist}, with the pink edges identified.

When there is no half twist, the 2-punctured torus is triangulated as in \reffig{CrossingDiscsFlat}(D). 
Attach to this a triangular prism as in \reffig{Prism}(A) and (B). This has three rectangular faces, two of which match the two rectangles of the 2-punctured torus, and the remaining face capping off these two rectangles. The other two faces of the prism are triangles, glued to the ideal shaded faces shown in \reffig{CrossingDiscsFlat}(C).

\begin{figure}[htb]
\centering
\begingroup%
  \makeatletter%
  \providecommand\color[2][]{%
    \errmessage{(Inkscape) Color is used for the text in Inkscape, but the package 'color.sty' is not loaded}%
    \renewcommand\color[2][]{}%
  }%
  \providecommand\transparent[1]{%
    \errmessage{(Inkscape) Transparency is used (non-zero) for the text in Inkscape, but the package 'transparent.sty' is not loaded}%
    \renewcommand\transparent[1]{}%
  }%
  \providecommand\rotatebox[2]{#2}%
  \newcommand*\fsize{\dimexpr\f@size pt\relax}%
  \newcommand*\lineheight[1]{\fontsize{\fsize}{#1\fsize}\selectfont}%
  \ifx\svgwidth\undefined%
    \setlength{\unitlength}{288.06313705bp}%
    \ifx\svgscale\undefined%
      \relax%
    \else%
      \setlength{\unitlength}{\unitlength * \real{\svgscale}}%
    \fi%
  \else%
    \setlength{\unitlength}{\svgwidth}%
  \fi%
  \global\let\svgwidth\undefined%
  \global\let\svgscale\undefined%
  \makeatother%
  \begin{picture}(1,0.38898321)%
    \lineheight{1}%
    \setlength\tabcolsep{0pt}%
    \put(0,0){\includegraphics[width=\unitlength,page=1]{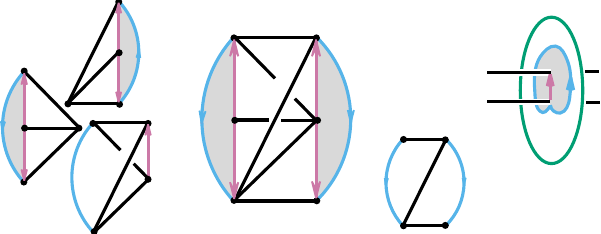}}%
    \put(0.01748969,0.00881252){\color[rgb]{0,0,0}\makebox(0,0)[lt]{\lineheight{1.25}\smash{\begin{tabular}[t]{l}(A)\end{tabular}}}}%
    \put(0.32862212,0.01157404){\color[rgb]{0,0,0}\makebox(0,0)[lt]{\lineheight{1.25}\smash{\begin{tabular}[t]{l}(B)\end{tabular}}}}%
    \put(0.58886476,0.0143356){\color[rgb]{0,0,0}\makebox(0,0)[lt]{\lineheight{1.25}\smash{\begin{tabular}[t]{l}(C)\end{tabular}}}}%
    \put(0.82569759,0.01525607){\color[rgb]{0,0,0}\makebox(0,0)[lt]{\lineheight{1.25}\smash{\begin{tabular}[t]{l}(D)\end{tabular}}}}%
  \end{picture}%
\endgroup%

  \caption{(A) The three tetrahedra making up a triangulated prism, (B)~the tetrahedra stacked into the triangulated prism, (C) the two triangles facing the crossing circle cusp, (D) the edges corresponding to blue and pink edges shown on the crossing disc in the fully augmented link.}
  \label{Fig:Prism}
\end{figure}

The shaded triangular face on the left of that prism (as well as the one on the right) now spans a meridian. Recall that the two pink edges of the face are both identified to the edge running between punctures on the 2-punctured crossing disc bounded by the crossing circle. The third edge (light blue) of the triangle runs from one of the punctures of the crossing circle disc to encircle a meridian of the other knot strand component.

In the case that there is a crossing adjacent to a crossing circle, the 2-punctured torus, which is obtained by removing the four tetrahedra meeting the crossing circle cusp, is triangulated as in \reffig{Prism_other}. Here, the situation is simpler. Layer a tetrahedron over the horizontal edge in the centre, covering the two triangles sharing an edge with the left boundary of the fundamental domain. One of the remaining triangles of the layered tetrahedra is now a shaded face spanning a meridian, and the remaining triangular face now facing the cusp. Attach a similar tetrahedron to the remaining two triangles sharing an edge with the right boundary of the fundamental domain. After identifying the shaded faces, two triangles are left over facing the crossing circle cusp, see \reffig{Prism_other} for an illustration.

\begin{figure}[htb]
\centering
\begingroup%
  \makeatletter%
  \providecommand\color[2][]{%
    \errmessage{(Inkscape) Color is used for the text in Inkscape, but the package 'color.sty' is not loaded}%
    \renewcommand\color[2][]{}%
  }%
  \providecommand\transparent[1]{%
    \errmessage{(Inkscape) Transparency is used (non-zero) for the text in Inkscape, but the package 'transparent.sty' is not loaded}%
    \renewcommand\transparent[1]{}%
  }%
  \providecommand\rotatebox[2]{#2}%
  \newcommand*\fsize{\dimexpr\f@size pt\relax}%
  \newcommand*\lineheight[1]{\fontsize{\fsize}{#1\fsize}\selectfont}%
  \ifx\svgwidth\undefined%
    \setlength{\unitlength}{306.06324005bp}%
    \ifx\svgscale\undefined%
      \relax%
    \else%
      \setlength{\unitlength}{\unitlength * \real{\svgscale}}%
    \fi%
  \else%
    \setlength{\unitlength}{\svgwidth}%
  \fi%
  \global\let\svgwidth\undefined%
  \global\let\svgscale\undefined%
  \makeatother%
  \begin{picture}(1,0.39211309)%
    \lineheight{1}%
    \setlength\tabcolsep{0pt}%
    \put(0,0){\includegraphics[width=\unitlength,page=1]{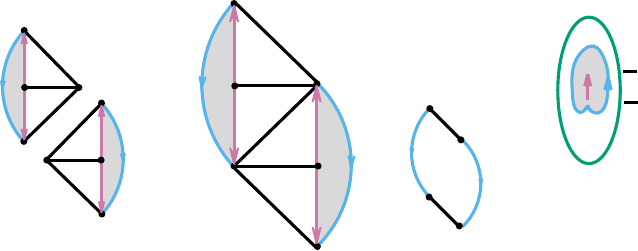}}%
    \put(0.01646109,0.03430089){\color[rgb]{0,0,0}\makebox(0,0)[lt]{\lineheight{1.25}\smash{\begin{tabular}[t]{l}(A)\end{tabular}}}}%
    \put(0.30929529,0.04180096){\color[rgb]{0,0,0}\makebox(0,0)[lt]{\lineheight{1.25}\smash{\begin{tabular}[t]{l}(B)\end{tabular}}}}%
    \put(0.60324215,0.03949915){\color[rgb]{0,0,0}\makebox(0,0)[lt]{\lineheight{1.25}\smash{\begin{tabular}[t]{l}(C)\end{tabular}}}}%
    \put(0.82614642,0.04036548){\color[rgb]{0,0,0}\makebox(0,0)[lt]{\lineheight{1.25}\smash{\begin{tabular}[t]{l}(D)\end{tabular}}}}%
    \put(0,0){\includegraphics[width=\unitlength,page=2]{TriangularPrismHalfTwist1.pdf}}%
  \end{picture}%
\endgroup%

  \caption{(A) Two tetrahedra to layer onto the two-punctured torus (B)~as shown on the cusp neighbourhood, (C) as a result we have two triangles facing the crossing circle cusp. (D) The edges corresponding to blue and pink edges shown on the crossing disc in the fully augmented link.}
  \label{Fig:Prism_other}
\end{figure}

Finally, in both cases, attach two ideal tetrahedra, each with one face on one of the two ideal triangles forming the new boundary after attaching the triangulated prism. The opposite ideal vertex is mapped to the crossing circle cusp, forming the cusp triangulation as claimed.
Repeat this for all crossing circles, replacing four tetrahedra per crossing circle with four or five tetrahedra. 
\end{proof}

\begin{lemma}\label{Lem:LST}
Suppose a cusp meets exactly two ideal tetrahedra in exactly one ideal vertex. Then a $1/n$ Dehn filling along that cusp can be realised by removing these two ideal tetrahedra, and adding a triangulated solid torus containing at most $|n|-1$ tetrahedra in the triangulation if $|n|\geq 2$, and at most one tetrahedron if $|n|=1$ or $n=0$. 
\end{lemma}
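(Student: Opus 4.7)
The plan is to attach a \emph{layered solid torus} (LST), in the sense of Jaco and Rubinstein, along the torus boundary exposed after removing the two cusp tetrahedra, with the meridian of the LST set to the slope $1/n$.

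I would start by examining the torus boundary. The hypothesis that the cusp meets exactly two ideal tetrahedra in one ideal vertex means the cusp cross-section is a two-triangle torus with three edges and one vertex. The three edge slopes form a Farey triple, and by \reflem{PrismFAL} one of them is the meridian $1/0$; the other two may be taken to be $0/1$ and $1/1$ (a Farey triple on a torus is determined by any two of its slopes being Farey neighbours). After removing the two tetrahedra, the new boundary torus inherits this triangulation.

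To realise the $1/n$ Dehn filling, I would build an LST by layering tetrahedra onto the boundary torus. Each layering glues a single tetrahedron across an edge of the current boundary, replacing that edge by the Farey sum of its two neighbours in the current Farey triple. Following the Farey path $1/0 \to 1/1 \to 1/2 \to \cdots \to 1/n$ of length $|n|$, and always layering across the current Farey-triple slope farthest from $1/n$, the new slope introduced at the $k$-th step is $1/k$. The crucial point is that the final layered tetrahedron can simultaneously act as the \emph{fold} that closes off the solid torus along the newly introduced $1/n$ edge, so no extra tetrahedron is needed to cap off the solid torus. Hence for $|n|\geq 2$ we obtain a triangulated solid torus with meridian slope $1/n$ using only $|n|-1$ tetrahedra; for negative $n$ the symmetric Farey path $1/0 \to 1/(-1) \to \cdots \to 1/n$ gives the same bound.

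The corner cases $n=0$ and $n=\pm 1$ are handled directly, since the slope $1/n$ is already an edge of the boundary Farey triple $\{1/0,\, 0/1,\, 1/1\}$; a single tetrahedron glued across this edge and folded across itself caps off the solid torus with meridian $1/n$, yielding the ``at most one tetrahedron'' bound. The main technical obstacle is the careful bookkeeping needed to confirm that the last layering tetrahedron indeed doubles as the closing fold (rather than requiring an additional tetrahedron), which is what delivers the sharp bound $|n|-1$ instead of a naive $|n|$ or $|n|+1$. Once this bookkeeping is set up, the tetrahedron count is immediate from the length of the Farey path.
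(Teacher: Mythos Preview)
Your approach is essentially the paper's: remove the two cusp tetrahedra, then glue in a layered solid torus navigated via the Farey triangulation, with the small cases $|n|\le 1$ handled separately. Two minor corrections to your bookkeeping: the fold at the end is not a tetrahedron at all but simply an identification of the two remaining boundary triangles, so the count $|n|-1$ arises because one walks at most $|n|$ steps in the Farey graph toward a triangle containing $1/n$ and stops one step short before folding; and the initial triple is $\{0/1,\,1/0,\,\pm 1/1\}$ rather than necessarily $\{0/1,\,1/0,\,1/1\}$, which matters for the $n=-1$ case (where, as the paper notes, one may be able to fold immediately with zero tetrahedra).
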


\begin{proof}
If $|n|>2$, the Dehn filling is obtained by inserting a layered solid torus, as in \cite{GueritaudSchleimer}; see also \cite[Section~3.1]{HamPurcell}, or \cite[Section~3.1]{HowieMathewsPurcell}. 
After removing the original two tetrahedra, we have an object with boundary a triangulated 1-punctured torus. Triangulations of 1-punctured tori are organised by the Farey triangulation of $\HH^2$. The solid torus that gives the Dehn filling is obtained by walking through the Farey triangulation from the triangle corresponding to the initial triangulation of the 1-punctured torus, and finishing exactly one step away from a triangle meeting the Dehn filling slope, then folding unglued faces. When the original triangulation has edges labelled $0/1$, $1/0$, and $\pm 1/1$, as is the case for the triangulations for \reflem{PrismFAL}, there are at most $|n|$ steps in the Farey graph between the triangle of these slopes and a triangle meeting slope $1/n$. Hence we attach at most $|n|-1$ tetrahedra to perform the Dehn filling. 

When $|n|=2$, the distance between the original triangle and the Dehn filling slope is either $1$ or $2$, so we either attach one tetrahedron and then fold, or we may fold immediately. In either case, at most $|n|-1 = 1$ tetrahedra are added.

When $|n|=1$ ($n=0$), the slope of the Dehn filling may be (is) one of the existing edges of the triangulation. In this case, layer on a tetrahedron covering that edge, then fold so that edge bounds a disc. This was called the \emph{extra-exceptional case} in \cite{HowieMathewsPurcell}. 
If the Dehn filling slope is not one of the existing edges, the distance between the original triangle and the Dehn filling slope is $1$, and we can fold without layering a tetrahedron. In any case, the result is a triangulation of the Dehn filling.
\end{proof}

\begin{proposition}\label{Prop:FullyAugLinks}
Every knot $K$ obtained by Dehn filling crossing circles in a hyperbolic fully augmented link in $S^3$ has complement $S^3-K$ admitting an ideal triangulation in which a face spans a meridian. 
\end{proposition}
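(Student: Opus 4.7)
My plan is to combine \reflem{PrismFAL} and \reflem{LST} directly. Using \refprop{AugGivesHyperbolic}, I fix a hyperbolic fully augmented link $L$ such that $K$ arises from $L$ via $1/n_i$ Dehn fillings on its crossing circles. Starting from the triangulation of $S^3 - L$ provided by \reflem{PrismFAL}, I apply \reflem{LST} to each crossing circle in turn: remove the two tetrahedra meeting that cusp at its single ideal vertex, and glue in a layered solid torus realising the prescribed slope. This produces a triangulation of $S^3 - K$.

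The heart of the matter is exhibiting, in this triangulation, a face spanning a meridian of $K$. I claim that a meridional face $T_c$ from \reflem{PrismFAL} (for any crossing circle $c$) does the job. By construction, the two identified ``pink'' edges of $T_c$ are both copies of the edge of the crossing disc running between its two punctures, which are the points where the knot pierces the crossing disc. Their shared endpoint --- the apex of $T_c$ --- is therefore an ideal vertex at a knot strand cusp, not at the crossing circle cusp. Since $K$ is a single component of $L$, all knot strand cusps belong to the single knot cusp of $S^3 - K$, and the meridian slope on a knot strand cusp agrees with the meridian slope of $K$: both are represented by a small loop encircling the curve in $S^3$, which is unchanged by Dehn filling of the crossing circles.

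Finally, I must check that $T_c$ persists through the Dehn filling. The face $T_c$ is a shaded face of one of the non-cusp tetrahedra added in the prism or layered construction of \reflem{PrismFAL}, glued on one side to another of those tetrahedra and on the other side to one of the two cusp tetrahedra that \reflem{LST} removes. After this removal, $T_c$ lies on the exposed boundary and is re-identified to a face of the attached layered solid torus when it is glued in. Thus $T_c$ remains a face of the final triangulation of $S^3 - K$, with apex still at a knot strand cusp and still spanning a meridian there.

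The main subtlety I expect to have to work through carefully is confirming that the apex of $T_c$ lies at a knot strand cusp rather than at the crossing circle cusp, since this is precisely what causes $T_c$ to span a meridian of $K$; this reduces to bookkeeping the edge identifications in the prism/layered construction of \reflem{PrismFAL} and checking that the resulting closed curve cut out by $T_c$ near its apex is indeed a meridian of the knot strand and hence of $K$.
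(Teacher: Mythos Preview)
Your approach is essentially the paper's: apply \reflem{PrismFAL} to get a meridional face $T_c$ at a knot strand cusp, then Dehn fill each crossing circle via \reflem{LST}, observing that $T_c$ survives. Two small corrections: first, you need not invoke \refprop{AugGivesHyperbolic}, since the hyperbolic fully augmented link is given in the hypothesis; second, your description of $T_c$'s neighbours is slightly off---in both the prism and half-twist cases the two shaded end faces are identified to each other, so $T_c$ lies between two \emph{prism/layered} tetrahedra, not adjacent to a cusp tetrahedron. This actually makes persistence easier than you argue: the Dehn filling at $c$ only removes the two cusp tetrahedra and replaces them by a layered solid torus, leaving $T_c$ entirely untouched (this is exactly the paper's one-line observation that ``no other tetrahedron in the decomposition is changed by the Dehn filling'').
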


\begin{proof}
By \reflem{SubdivideFAL}, we can subdivide a fully augmented link into tetrahedra with faces forming a triangulation of the disc bounding a crossing circle. By \reflem{PrismFAL}, this triangulation can be adjusted to obtain a face spanning a meridian for each crossing circle. By \reflem{LST}, we can then perform Dehn fillings by replacing two tetrahedra per crossing circle with a triangulated solid torus. Observe that no other tetrahedron in the decomposition is changed by the Dehn filling. In particular a face spanning the meridian of the knot strand remains a face spanning the meridian. Repeat this for each crossing circle. 
The final result is a triangulation of the knot complement with a face spanning a meridian for each crossing circle. This satisfies the conclusion of the proposition. 
\end{proof}

\begin{theorem}\label{Thm:1vertex}
\Paste{Statement:1vertex}
\end{theorem}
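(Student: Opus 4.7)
The plan is to simply assemble the three main pieces of machinery already established in the paper. Given an arbitrary knot $K \subset S^3$, I would first apply \refprop{AugGivesHyperbolic} to realise $K$ as the result of Dehn filling the crossing circle components of a hyperbolic fully augmented link $L$. This step handles every case uniformly, including hyperbolic knots, $(2,q)$-torus knots (via the pretzel-link detour), and non-prime augmented diagrams (via the inductive connected-sum modification).

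Next, I would feed this fully augmented link $L$ into \refprop{FullyAugLinks}, which produces an ideal triangulation $\tri$ of $S^3 - K$ such that some face $T$ of $\tri$ spans a meridian of the knot. In fact the construction provides such a face for every crossing circle before the Dehn fillings are performed, but only one such $T$ is needed here. I should also verify (or invoke the fact built into the construction) that $T$ is adjacent to two distinct tetrahedra, which is the standing hypothesis required for the insertion construction.

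Finally, I would apply \refthm{IdealToFiniteTriangulation} (or its specialisation \refcor{KnottedEdgeS3}) to the pair $(\tri, T)$: insert the folded tetrahedron $\pmb{\Delta}$ of \reffig{Tetrahedron} along $T$. By that theorem the result is a one-vertex triangulation of $S^3$, with material (not ideal) vertex, whose distinguished edge $E$ forms the knot $K$. This is exactly the conclusion of \refthm{1vertex}.

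Since all the technical work has been done in the supporting propositions, there is no real obstacle remaining; the only thing to be careful about is the compatibility of hypotheses between the three results, namely that the face $T$ produced by \refprop{FullyAugLinks} is a hat triangle adjacent to two distinct tetrahedra so that \refthm{IdealToFiniteTriangulation} applies. This follows from the explicit nature of the prism construction in \reflem{PrismFAL}, so no additional argument is required.
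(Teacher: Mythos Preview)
Your proposal is correct and follows essentially the same approach as the paper's own proof: invoke \refprop{AugGivesHyperbolic}, then \refprop{FullyAugLinks}, then \refcor{KnottedEdgeS3} (equivalently \refthm{IdealToFiniteTriangulation}). Your added remark about checking that the hat triangle $T$ is adjacent to two distinct tetrahedra is a worthwhile observation that the paper leaves implicit.
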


\begin{proof}
Let $K$ be a knot in $S^3$. By \refprop{AugGivesHyperbolic}, $K$ can be obtained from some hyperbolic fully augmented link by Dehn filling the crossing circle components. By \refprop{FullyAugLinks}, $S^3-K$ admits an ideal triangulation in which a face (a hat triangle) that spans a meridian. \refcor{KnottedEdgeS3} proves the result; a one-vertex triangulation of $S^3$ with an edge forming $K$ is obtained from $\tri$ by inserting the folded tetrahedron $\pmb \Delta$ along a hat triangle.
\end{proof}

\section{Applications}\label{Sec:Bounds}

In this section we present two applications of our construction. We start by summarising the results from \Cref{sec:mertriangle} regarding the size of our triangulations.

\begin{theorem}\label{Thm:TetrahedronCount}
Suppose $K$ is obtained by Dehn filling a hyperbolic fully augmented link with $c$ crossing circles, along Dehn filling slopes $1/n_1, \dots 1/n_c$. Then the one-vertex triangulation from \refthm{1vertex} contains at most $12c + \sum |n_i| - 7$ tetrahedra.
\end{theorem}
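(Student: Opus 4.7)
The plan is a straightforward accounting argument that chains together the four constructions of \Cref{sec:mertriangle}: \reflem{SubdivideFAL}, \reflem{PrismFAL}, \reflem{LST}, and the final insertion of the folded tetrahedron $\pmb \Delta$ from \refthm{IdealToFiniteTriangulation}. At each stage we track the number of tetrahedra carefully and show that the cumulative bound is $12c + \sum |n_i| - 7$.

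First I would invoke \reflem{SubdivideFAL} to triangulate the fully augmented link complement with at most $2(6c-4) = 12c - 8$ ideal tetrahedra. Next, I would apply \reflem{PrismFAL} to each of the $c$ crossing circles. Each application either leaves the tetrahedron count unchanged (when the crossing circle is adjacent to a half-twist) or increases it by exactly one (when it is not), so after this adjustment the triangulation has at most $13c - 8$ tetrahedra. Crucially, at this stage each crossing circle cusp meets exactly two ideal tetrahedra in a single ideal vertex and carries a triangular face spanning a meridian, which is precisely the hypothesis needed to feed into the Dehn-filling step.

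Third, for each crossing circle I would perform the $1/n_i$ Dehn filling via \reflem{LST}, removing the two cusp-facing tetrahedra and inserting a triangulated solid torus. The key estimate to verify is that in every case the net change per crossing circle satisfies $T_i - 2 \leq |n_i| - 1$: when $|n_i| \geq 2$ one has $T_i - 2 \leq (|n_i|-1) - 2 = |n_i| - 3 \leq |n_i| - 1$, and when $|n_i| \in \{0,1\}$ one has $T_i - 2 \leq 1 - 2 = -1 \leq |n_i| - 1$ (with equality exactly at $n_i = 0$). Summing over the $c$ crossing circles contributes at most $\sum (|n_i| - 1) = \sum |n_i| - c$ additional tetrahedra, so the ideal triangulation of $S^3 - K$ produced by \refprop{FullyAugLinks} has at most $12c - 8 + \sum |n_i|$ tetrahedra and still possesses a face spanning a meridian of $K$.

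Finally, applying \refthm{IdealToFiniteTriangulation} inserts the single folded tetrahedron $\pmb \Delta$ along the meridional hat triangle, adding exactly one tetrahedron and yielding the promised one-vertex triangulation of $S^3$ containing an edge that forms $K$ with at most $12c - 7 + \sum |n_i|$ tetrahedra. There is no genuine obstacle here: the argument is pure bookkeeping, and the only subtle point is the uniform inequality $T_i - 2 \leq |n_i| - 1$ that collapses the three cases of \reflem{LST} into the single linear expression appearing in the final bound.
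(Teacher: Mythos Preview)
Your proposal is correct and follows essentially the same bookkeeping argument as the paper's proof, chaining \reflem{SubdivideFAL}, \reflem{PrismFAL}, \reflem{LST}, and the insertion of $\pmb\Delta$ in exactly the same order with the same intermediate counts $12c-8$, $13c-8$, and $12c-8+\sum|n_i|$. Your uniform inequality $T_i - 2 \leq |n_i| - 1$ is in fact a slightly cleaner way to package the Dehn-filling step than the paper's cruder $-2c + c + \sum|n_i|$ accounting, but the two arrive at the identical bound.
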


\begin{proof}
We first triangulate the fully augmented link as explained in the proof of \Cref{Lem:SubdivideFAL}. This yields a triangulation with $2\cdot(6c-4) = 12c -8$ tetrahedra. Adjusting the triangulation to obtain two-triangle crossing circle cusps following \Cref{Lem:PrismFAL} adds at most one tetrahedron per crossing circle cusp, bringing the total to at most $13c-8$ tetrahedra. By \Cref{Lem:LST}, every Dehn filling $1/n_i$, with $|n_i| > 1$, removes two tetrahedra and replaces them by at most $|n_i|-1$ tetrahedra, while every Dehn filling along slope $1/(\pm 1)$ or $1/0$ removes two tetrahedra and adds at most a single extra tetrahedron.

It follows that the overall number of tetrahedra is bounded above by $13c-8 - 2c + c + \sum |n_i| = 12c-8 + \sum |n_i|$.
Finally, adding the single tetrahedron $\Delta$ via \refcor{KnottedEdgeS3} yields the result.
\end{proof}

\begin{corollary}
\label{Cor:hyperbolic_knot_tet_count}
Let $K$ be a hyperbolic knot in $S^3$. Suppose $K$ has a prime, twist-reduced diagram $D$ with $c$ twist regions, containing $t_1, \ldots, t_c$ crossings respectively. Then there is a one-vertex triangulation of $S^3$ with an edge forming $K$, and at most $12c+ \sum_{j=1}^c \lfloor t_j/2 \rfloor - 7$ tetrahedra.
\end{corollary}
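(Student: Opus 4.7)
The plan is to combine \refprop{AugGivesHyperbolic} with \refthm{TetrahedronCount} and keep careful track of the Dehn filling slopes that produce $K$ from the augmented link.

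First, I would invoke \refprop{AugGivesHyperbolic} applied to the given prime, twist-reduced diagram $D$. Since $K$ is hyperbolic, $D$ has at least two twist regions, so the construction in the proof of \refprop{AugGivesHyperbolic} directly yields a hyperbolic fully augmented link $L$ with one crossing circle per twist region of $D$, i.e.\ with $c$ crossing circles. The original knot $K$ is then obtained from $L$ by Dehn filling the $c$ crossing circles along slopes $1/n_1, \dots, 1/n_c$, where $2|n_j|$ is the number of crossings removed in the $j$-th twist region to bring it down to either $0$ or $1$ crossings (according to the parity of $t_j$).

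Next, I would determine $|n_j|$ explicitly in terms of $t_j$. If $t_j$ is even then all $t_j$ crossings are removed in pairs, so $|n_j| = t_j/2 = \lfloor t_j/2\rfloor$. If $t_j$ is odd then $t_j - 1$ crossings are removed in pairs, leaving one crossing adjacent to the crossing circle, so $|n_j| = (t_j-1)/2 = \lfloor t_j/2\rfloor$. In either case
\[
|n_j| = \lfloor t_j/2 \rfloor.
\]

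Finally, I would plug these values into the tetrahedron count given by \refthm{TetrahedronCount}. That theorem produces a one-vertex triangulation of $S^3$ with an edge forming $K$ and at most
\[
12c + \sum_{j=1}^{c} |n_j| - 7 \;=\; 12c + \sum_{j=1}^{c} \lfloor t_j/2 \rfloor - 7
\]
tetrahedra, which is exactly the desired bound. There is no real obstacle here: the corollary is essentially a packaging of \refthm{TetrahedronCount} in terms of diagrammatic data, with the only substantive check being the identification $|n_j| = \lfloor t_j/2\rfloor$ from the augmentation construction.
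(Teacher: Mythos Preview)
Your proposal is correct and follows essentially the same route as the paper: augment the prime, twist-reduced diagram to a hyperbolic fully augmented link with $c$ crossing circles (using that a hyperbolic knot has at least two twist regions), identify $|n_j|=\lfloor t_j/2\rfloor$ by parity, and apply \refthm{TetrahedronCount}.
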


\begin{proof}
Following the standard procedure as in \Cref{Prop:AugGivesHyperbolic}, augment $D$ with crossing circles around each of the $c$ twist regions. Remove an even number of crossings from each twist region, to obtain a fully augmented link $L$. Then $S^3 \setminus K$ is obtained from $S^3 \setminus L$ by $1/n_i$ Dehn filling the $i$th crossing circle, where $2|n_i| = t_i$ or $t_i - 1$ accordingly as $t_i$ is even or odd, i.e. $|n_i| = \lfloor t_i/2\rfloor$. As $D$ is prime, twist reduced, and must have at least two twist regions since $K$ is hyperbolic, $L$ is hyperbolic. \Cref{Thm:TetrahedronCount} then gives the result.
\end{proof}

A straightforward consequence from \Cref{Thm:TetrahedronCount} is that our construction is most efficient for prime knots with few twist regions or, more generally, for knots with few crossing circles (relative to their numbers of crossings) in their fully augmented link diagrams. In what follows, we discuss two families of examples that are extreme with respect to this observation: one with only two crossing circles (\Cref{sec:twistknots}), and one with the number of crossing circles exceeding the crossing number of the initial diagram (\Cref{Sec:trefoils}).

\subsection{Knots with few crossing circles}
\label{sec:twistknots}

Consider a twist-reduced, crossing minimal diagram of the double twist knot $J(k,\ell)$, with $k$ and $\ell$ crossings in its two respective twist regions. Its fully augmented link diagram has two crossing circles and either $0$ or $1$ crossings depending on the parities of $k$ and $\ell$. (The case when $k$ and $\ell$ are both odd is omitted because in this case $J(k, \ell)$ is a two-component link.) The knot $J(k,\ell)$ is recovered from this diagram by performing $1/\lfloor k/2 \rfloor$ and $1/\lfloor l/2 \rfloor$ Dehn fillings on the respective crossing circles.

Following \cite[Lemma~2.2]{HamPurcell}, we can triangulate the complement of $J(k,\ell)$ starting from the $8$-tetrahedra triangulation with degenerate remnant, as described in \Cref{Lem:SubdivideFAL}. To triangulate $J(k,\ell)$ crossing circle by crossing circle, we have to distinguish two cases: an even and an odd number of twists in a twist region.

If we have an even number $n>2$ of twists in a twist region, the respective crossing circle is not adjacent to a crossing. Inserting the prism construction from \Cref{Fig:Prism} adds three tetrahedra. Note that we get to choose the diagonal of the two boundary triangles facing the cusp and we hence only need to layer $n-2$ more tetrahedra before we fold to realise a $1/n$ Dehn filling. Altogether, this requires $1+ n/2$ tetrahedra.

If we have an odd number $n>1$ of twists in a twist region, the respective crossing circle is adjacent to a crossing. We first layer two tetrahedra as shown in \Cref{Fig:Prism_other}. Note that, this time, we do not get to choose the diagonal which, moreover, is always facing into the wrong direction, causing us to layer another $(n-1)/2-1$ tetrahedra before we can fold. Altogether, this requires $1+(n-1)/2$ tetrahedra.

It follows that the complement of $J(k,\ell)$ has an ideal triangulation with a meridional hat triangle and $2 + \lfloor k/2 \rfloor + \lfloor \ell/2 \rfloor$ tetrahedra, and hence there exists a one-vertex triangulation of $S^3$ with $3 + \lfloor k/2 \rfloor + \lfloor \ell/2 \rfloor$ tetrahedra and an edge forming $J(k,\ell)$.

The ideal triangulations are conjectured to use the smallest possible number of tetrahedra in forthcoming work by the first three authors of this article \cite{Ibarra24preprint}.

\subsection{Knots with many crossing circles}
\label{Sec:trefoils}

As mentioned above, the triangulation constructed to prove \refthm{1vertex} has some inefficiencies when the number of crossing circles is high with respect to the number of crossings or, equivalently, the Dehn fillings have small slopes, such as $1/0$ or $1/1$. However, even in this case our construction yields interesting results. We focus here on the $t$-fold connected sum of the trefoil $K_t$.

\begin{corollary}\label{cor:trefoilsphere}
  Let $K_t$ be a $t$-fold connected sum of the trefoil knot. Then there exists a one-vertex triangulation of the $3$-sphere with an edge forming $K_t$ and at most $48t - 19$ tetrahedra.
\end{corollary}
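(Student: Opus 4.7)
The plan is to apply \refthm{TetrahedronCount} directly, after writing $K_t$ as a Dehn filling of a carefully constructed hyperbolic fully augmented link whose crossing-circle count and slopes are easy to read off. Following the proof of \refprop{AugGivesHyperbolic}, each trefoil summand is handled by the torus-knot augmentation, and each connect-sum arc is handled by the non-prime augmentation trick; all of the resulting Dehn filling slopes will be $1/0$.

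In detail, I would start with the standard planar diagram of $K_t$ drawn as $t$ copies of the three-crossing $(2,3)$-torus knot diagram joined in a row by $t-1$ connect-sum arcs. For each trefoil summand, insert three crossing circles, one at each crossing meeting a bigon region, using the torus-knot augmentation pictured on the left of \reffig{FALT2q}. This contributes $3t$ crossing circles. Next, for each of the $t-1$ connect-sum arcs, insert one further crossing circle straddling that arc, exactly as in the non-prime case of \refprop{AugGivesHyperbolic}. The resulting fully augmented link $L$ has $c = 3t + (t-1) = 4t-1$ crossing circles.

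By the primeness and twist-reducedness argument inside the proof of \refprop{AugGivesHyperbolic}, $L$ is prime and twist-reduced; it has at least two twist regions for every $t \geq 1$. Hence $L$ is hyperbolic by \cite[Theorem~6.1]{Purcell:Cusps}. The knot $K_t$ is recovered from $L$ by performing $1/0$ Dehn filling on every crossing circle: the $3t$ torus-knot circles restore the individual trefoils, as recorded in \refprop{AugGivesHyperbolic}, and the $t-1$ connect-sum circles are trivially filled to re-identify the pair of strands they straddle. In particular, every slope satisfies $|n_i| = 0$.

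Feeding $c = 4t-1$ and $\sum_i |n_i| = 0$ into \refthm{TetrahedronCount} yields a one-vertex triangulation of $S^3$ with an edge forming $K_t$ and at most
\[
12(4t-1) + 0 - 7 \;=\; 48t - 19
\]
tetrahedra, as desired. The only real obstacle is verifying inductively that inserting the $t-1$ non-primeness crossing circles preserves primeness and twist-reducedness, but this is precisely the inductive argument already carried out in the non-prime case of \refprop{AugGivesHyperbolic}.
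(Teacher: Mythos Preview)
Your argument is correct and follows essentially the same route as the paper: construct a hyperbolic fully augmented link for $K_t$ with $4t-1$ crossing circles (three per trefoil summand plus one per connect-sum arc), note that all Dehn filling slopes are $1/0$, and plug $c=4t-1$, $\sum|n_i|=0$ into \refthm{TetrahedronCount}. The paper's proof is terser but invokes exactly the same construction (see \reffig{FALT2q} and the figure of the fully augmented $K_t$), so there is no substantive difference.
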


\begin{proof}
  The knot $K_t$ famously has crossing number $3t$, and, following our construction in \Cref{sec:mertriangle}, a fully augmented link diagram of $K_t$ requires $4t-1$ crossing circles: one per crossing, and one crossing circle per connected sum operation. See \Cref{figureoftrefoils} for an illustration of such a fully augmented link diagram. Moreover, to recover $K_t$ from this fully augmented link diagram, each crossing circle must be Dehn-filled with slope $1/0$. The result now directly follows from the (non-optimal) bound given in \Cref{Thm:TetrahedronCount}.  
\end{proof}
 
  \begin{figure}[ht]
\centering
  \includegraphics{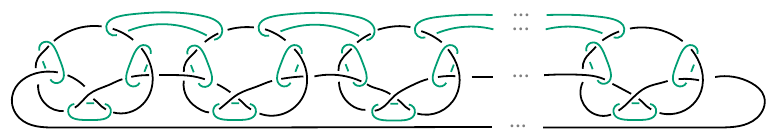}
  \caption{Fully augmented link diagram of a $t$-fold connected sum of trefoils.}
  \label{figureoftrefoils}
\end{figure}

Passing to the second derived subdivision of the triangulation from \Cref{cor:trefoilsphere} immediately yields the following result.

\begin{corollary}\label{cor:trefoilsimplicialsphere}
  There exists a $24^2 \cdot (48t - 19)$ tetrahedron simplicial triangulation of the $3$-sphere, containing an edge loop of length $4$ forming a $t$-fold connected sum of the trefoil.
\end{corollary}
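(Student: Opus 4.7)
The plan is to apply the construction of the preceding \Cref{cor:trefoilsphere} and then invoke the general principle, noted in the introduction of the paper, that passing to the second derived (barycentric) subdivision turns a one-vertex generalised triangulation into a simplicial one in which knotted edges become edge loops of length four.

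First I would start with the one-vertex triangulation $\tilde{S}_t$ of $S^3$ supplied by \Cref{cor:trefoilsphere}: it has at most $48t-19$ tetrahedra and contains a distinguished edge $E$ forming the $t$-fold connected sum of the trefoil $K_t$. Next I would take the first barycentric subdivision $\mathrm{sd}(\tilde{S}_t)$. Since barycentric subdivision of a single $3$-simplex produces $4! = 24$ tetrahedra, this subdivision has at most $24 \cdot (48t-19)$ tetrahedra; however, because the original triangulation is only generalised (faces may be identified with themselves in nontrivial ways), $\mathrm{sd}(\tilde{S}_t)$ is not yet guaranteed to be simplicial. Then I would apply barycentric subdivision once more to obtain the second derived subdivision $S_t := \mathrm{sd}^2(\tilde{S}_t)$, which multiplies the number of tetrahedra by another factor of $24$, giving a total of at most $24^2 \cdot (48t-19)$ tetrahedra. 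A standard fact (used in the paper's \Cref{Cor:SimplicialLength4}) is that the second derived subdivision of any simplicial-like CW complex is simplicial, so $S_t$ is a genuine simplicial triangulation of $S^3$.

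Finally I would track the image of the knotted edge $E$. Under a single barycentric subdivision, the edge $E$ is split at its midpoint into two edges; under the second subdivision each of those is split again, producing a path of $4$ edges whose union is isotopic to $E$ and which is therefore an edge loop forming $K_t$ (both of its endpoints lie on the barycentre of $E$ after the identifications, giving a closed loop of length $4$).

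The only mildly delicate step is verifying that this edge loop is indeed a simplicial loop of length exactly four in $S_t$, i.e., that after the two subdivisions the four sub-edges are pairwise distinct $1$-simplices of $S_t$ and that the cycle they form is embedded. This is where the passage to the \emph{second} derived subdivision (rather than the first) is essential: it ensures sufficient separation so that no identifications of sub-edges occur. Given the explicit local model of barycentric subdivision, this is a routine check, and combined with the tetrahedron count above it completes the proof.
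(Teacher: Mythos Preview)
Your proposal is correct and follows exactly the paper's approach: the paper's proof is the single sentence ``Passing to the second derived subdivision of the triangulation from \Cref{cor:trefoilsphere} immediately yields the following result,'' and you have simply unpacked this, tracking the tetrahedron count ($\times 24^2$), the simpliciality of the second derived subdivision, and the subdivision of the knotted loop edge into four sub-edges. Your description of the edge becoming a length-$4$ loop is slightly awkwardly phrased (the point is that $E$ is already a loop at the unique vertex, so one subdivision gives a $2$-edge loop and the second a $4$-edge loop), but the content is right.
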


\Cref{cor:trefoilsimplicialsphere} combined with \Cref{Thm:benedettilickorish} now finishes the proof of \Cref{Cor:DiscreteMorseCor}, that there exists a simplicial triangulation of the $3$-sphere $\tri$ with $24^2 \cdot (48t - 19)$ tetrahedra such that every discrete Morse function on $\tri$ must have at least $t-3$ critical $2$-faces. We use \Cref{Thm:benedettilickorish} in the opposite direction to prove a lower bound on the size of a triangulation of the $3$-sphere with a short edge loop forming $K_t$.

\begin{corollary}\label{Cor:LowerBound}
Let $K_t$ denote the $t$-fold connected sum of the trefoil. 
For any positive integer $t$, and any positive integer $m$, suppose a simplicial triangulation of the $3$-sphere contains an edge loop of size at most $m$ forming $K_t$. Then the triangulation must contain at least $\frac12(t-m+1)$ tetrahedra.
\end{corollary}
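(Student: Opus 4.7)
The plan is to turn \Cref{Thm:benedettilickorish} into a lower bound on the tetrahedron count by combining two elementary ingredients: first, the number of critical $2$-faces of any discrete Morse function on the triangulation cannot exceed the total number of triangles; and second, in a closed simplicial $3$-manifold the number of triangles is exactly twice the number of tetrahedra. The Benedetti--Lickorish inequality runs in the ``wrong'' direction for our usual purposes (it forces Morse functions to be complicated), but here we exploit precisely that: a triangulation that is too small cannot accommodate enough $2$-faces to support the required critical count.

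First I would let $S$ denote the given simplicial triangulation of $S^3$, and observe that if its distinguished edge loop forming $K_t$ has $m' \le m$ edges, then \Cref{Thm:benedettilickorish} already guarantees $c_2(f) \ge t - m' + 1 \ge t - m + 1$ for every discrete Morse function $f$ on $S$. I would then pick any such $f$ (the trivial discrete Morse function in which every face is declared critical will do). Since the critical $2$-cells form a subset of the $2$-cells of $S$, we have the trivial upper bound $c_2(f) \le f_2(S)$, where $f_k(S)$ denotes the number of $k$-faces of $S$.

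The second ingredient is a standard double-count: in any closed simplicial $3$-manifold every triangle is a face of exactly two tetrahedra, and every tetrahedron has four triangular faces, so $2\, f_2(S) = 4\, f_3(S)$, i.e.\ $f_2(S) = 2\, f_3(S)$. Chaining the inequalities gives
\[
  t - m + 1 \;\le\; c_2(f) \;\le\; f_2(S) \;=\; 2\, f_3(S),
\]
from which $f_3(S) \ge \tfrac{1}{2}(t - m + 1)$, as required.

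I do not foresee any real obstacle; the argument is essentially a bookkeeping chain once \Cref{Thm:benedettilickorish} is available. The only minor points requiring care are (i) verifying that an edge loop of size $m' \le m$ forming $K_t$ meets the hypotheses of \Cref{Thm:benedettilickorish} with $m'$ in place of $m$ (so that the bound only strengthens as $m' \le m$), and (ii) invoking the correct ``any discrete Morse function exists'' fact to apply the inequality $c_2(f) \le f_2(S)$ meaningfully, which the trivial Morse matching handles.
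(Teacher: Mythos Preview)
The proposal is correct and follows essentially the same approach as the paper: apply \Cref{Thm:benedettilickorish} to force at least $t-m+1$ critical $2$-faces, bound this by the total number of triangles, and use the closed-$3$-manifold identity $f_2 = 2f_3$. Your version is slightly more explicit about the $m' \le m$ monotonicity and about the existence of some discrete Morse function, but the argument is otherwise identical.
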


\begin{proof}
Fix positive integers $t$ and $m$. Take a simplicial triangulation of the $3$-sphere $\tri$ with an edge loop of length at most $m$ forming $K_t$, and suppose the triangulation has $R$ tetrahedra. 

By \Cref{Thm:benedettilickorish}, every discrete Morse function on $\tri$ must have at least $t-m+1$ critical triangles, and hence must contain at least $t-m+1$ triangles overall. On the other hand, $\tri$ has $R$ tetrahedra and hence $2R$ triangles. 
\end{proof}

In particular, when $m=4$, such a simplicial triangulation has $O(t)$ tetrahedra, showing that the triangulation of \Cref{cor:trefoilsimplicialsphere} is asymptotically optimal up to a constant factor.

\begin{remark}
  A technique contained in forthcoming work by the fourth author together with He and Sedgwick~\cite{He24preprint}, contains an alternative procedure to obtain a slightly smaller one-vertex triangulation of the $3$-sphere with an edge forming a $t$-fold connected sum of the trefoil $K_t$. This procedure triangulates crossing gadgets using $9$ tetrahedra each. Starting with a diagram of $K_t$ with $3t$ crossings, these are assembled to form a $27t$-tetrahedron triangulation of the $3$-sphere with an edge-loop of length $6t$ forming $K_t$. Using an improved version of Jeff Weeks' tunneling trick, this triangulation can be adjusted to a $(33t-1)$-tetrahedron one-vertex triangulation of the $3$-sphere containing $K$ as a single loop-edge. 

\medskip

  The same work also contains a lower bound of such a triangulation of $2t$: There exists a $1$-tetrahedron triangulation of the $3$-sphere containing an edge loop forming the trefoil. A $t$-fold connected sum of the trefoil is then shown to contain at least $t + \sum \limits_{i=1}^{t} 1  = 2t$ tetrahedra. This bound is not expected to be tight, but provides an alternative proof that our triangulations are optimal up to a constant multiplicative factor.
\end{remark}

\bibliographystyle{amsplain}
\bibliography{biblio}

\end{document}